\setlist[enumerate]{label={\rm(\roman*)}}
\newtheorem{theorem}{Theorem}[section]
\newtheorem{proposition}[theorem]{Proposition}
\theoremstyle{remark}
\newtheorem{remark}[theorem]{Remark}
\numberwithin{equation}{section}
\let\expandafter\oldproof\csname\string\proof\endcsname
\let\oldendproof\endproof
\renewenvironment{proof}[1][\proofname]{%
  \oldproof[\bf #1]%
}{\oldendproof}
\def\M{\mathcal M}
\def\vr{\varrho}
\def\vp{\varphi}
\def\afi{A_{\vp}}
\begin{document}

\title{Weighted inequalities for sub-monotone functionals}

\author{Amiran Gogatishvili and Lubo\v s Pick}

\email[A.~Gogatishvili]{gogatish@math.cas.cz}
\urladdr{0000-0003-3459-0355}
\email[L.~Pick]{pick@karlin.mff.cuni.cz}
\urladdr{0000-0002-3584-1454}

\address{
 Institute of Mathematics of the Czech 
 Academy of Sciences,
 \v Zitn\'a~25,
 115~67 Praha~1,
 Czech Republic}

\address{
 Department of Mathematical Analysis,
	Faculty of Mathematics and Physics,
	Charles University,
	Sokolovsk\'a~83,
	186~75 Praha~8,
	Czech Republic}
	
\subjclass[2010]{26D15, 46E30, 47G10}

\keywords{Weighted inequalities, integral operators, sub-monotone functional}


\begin{abstract}
    We establish a set of relations between several quite diverse types of weighted inequalities involving various integral operators and fairly general quasinorm-like functionals which we call sub-monotone. The main result enables one to solve a specific problem by transferring it to another one for which a solution is known. The main result is formulated in a rather surprising generality, involving previously unknown cases, and it works even for some nonlinear operators such as the geometric or harmonic mean operators. Proofs use only elementary means.
\end{abstract}

\date{\today}

\maketitle

\section{Introduction and the main results}

Let $\mathcal M_+(0,\infty)$ be the collection of all measurable functions on $(0,\infty)$ with values in $[0,\infty]$. We say that a~functional $\vr\colon \mathcal M_+(0,\infty)\to [0,\infty]$ is \emph{sub-monotone} if it is monotone, weakly subadditive and subhomogeneous in the sense that one has
    \begin{equation}\label{E:lattice}
        \text{$\vr(f)\le\vr(g)$ for every $f,g\in \mathcal M_+(0,\infty)$ such that $f \le g$,}
    \end{equation}
    and there exists a~positive constant $K$ with the following two properties:
    \begin{equation}\label{E:quasitriangle}
        \text{if $\vr(\boldsymbol{1})<\infty$, then}\quad\vr(f+\boldsymbol{1}) \le K (\vr(f)+\vr(\boldsymbol{1}))\quad\text{for every $f\in \mathcal M_+(0,\infty)$,}
    \end{equation}
    where $\boldsymbol{1}=\chi_{(0,\infty)}$, and
    \begin{equation}\label{E:weak-lattice}
        \vr(\lambda f)\le K\lambda\vr(f)
        \quad\text{for every $\lambda\ge0$ and $f\in \mathcal M_+(0,\infty)$.}
    \end{equation}
    
    Note that a sub-monotone functional is defined by axiomatized properties which are essentially less restrictive than those that define a~monotone quasinorm (see~\cite{Gog:20}).
    
A wide variety of customary functionals are sub-monotone. An elementary example is the (quasi-)norm in the weighted Lebesgue space, namely
\begin{equation*}
    \varrho(f) = \left\|f\right\|_{L^q(v)}
\end{equation*}
for some fixed $q\in(0,\infty]$ and given \emph{weight} (measurable and almost everywhere strictly positive function on $(0,\infty)$) $v$, but one can also bring in some more exotic ones such as
\begin{equation*}
    \varrho(f) = \left\|\sup_{s\in(t,\infty)}f(s)v(s)\right\|,
\end{equation*}
in which $\|\cdot\|$ is an arbitrary quasinorm, or
\begin{equation*}
    \varrho(f) = \left\|\left(\int_{0}^{t}f^rw\right)^{\frac{1}{r}}\right\|_{L^{q}(v)}
\end{equation*}
with $r\in(0,\infty)$ and another weight $w$, and so on.

Given a weight $v$, we will throughout denote by $V$ the function given by
\begin{equation*}
    V(t)=\int_{0}^{t}v(s)\,ds
    \quad\text{for $t\in(0,\infty)$.}
\end{equation*}

A prototype of the problems which we focus on in this paper is the following: given $p\in(1,\infty)$, a weight $v$ and a sub-monotone functional $\varrho$, does there exist a positive constant $C$ such that
\begin{equation}\label{E:basic}    
    \vr\left(\frac{1}{V(t)}\int_{0}^{t}f(s)v(s)\,ds\right) \le C \left(\int_{0}^{\infty}f(t)^{p}v(t)\,dt\right)^{\frac{1}{p}}
    \quad\text{for every $f\in\M_+(0,\infty)$},
\end{equation}
and, if so, is there a quantification of the optimal such $C$ in terms of the parameters involved. It is worth noticing that the requirements on $\varrho$ are  very mild (it does not even have to be a quasinorm), hence there is a considerable versatility of important inequalities to which this approach can be applied. In particular, many customary inequalities can be cast in the form~\eqref{E:basic}. Consider, for instance, the classical two-weight Hardy inequality
\begin{equation}\label{E:hardy}
    \left(\int_{0}^{\infty}\left(\int_{0}^{t}f\right)^{q}w(t)\,dt\right)^{\frac{1}{q}}
        \leq C
        \left(\int_{0}^{\infty}f^{p}u\right)^{\frac{1}{p}}
\end{equation}
with $p\in(1,\infty)$ and $q\in(0,\infty)$. Then,~\eqref{E:hardy} becomes~\eqref{E:basic} on setting first $v=u^{1-p'}$, and then
\begin{equation*}
    \varrho(f)=\left(\int_{0}^{\infty}f^qwV^q\right)^{\frac{1}{q}}
    \quad\text{for $f\in\M_+(0,\infty)$.}
\end{equation*}

The requirement $p\in(1,\infty)$ is quite natural since, for any $p\in(0,1)$, one can easily construct functions for which
\begin{equation*}
     \int_{0}^{\infty}f^{p}u
     <\infty,
\end{equation*}
but which are not locally integrable, whence the inequality~\eqref{E:hardy} would be impossible. The remaining case $p=1$ requires a separate treatment which is not considered here.

The question of characterizing pairs of weights for which~\eqref{E:hardy} holds is one of the oldest in analysis, and a lot of effort has been spent on finding the answer. For $p=q>1$, $v=1$,
$w(t)=t^{-q}$, it collapses to the boundedness of the integral averaging operator on $L^{p}(0,\infty)$, see~\cite{HLP:88}. The cases with general weights were probably first studied by Kac and Krein~\cite{KacKre:58} for $p=q=2$ and $v=1$, then by Beesack, see e.g.~\cite{Bee:61} for some other specific weights, by Tomaselli~\cite{Tom:69}, Talenti~\cite{Tal:69} and Muckenhoupt~\cite{Muc:72} for $p=q$, by Bradley~\cite{Bra:78}, see also (without proof) Kokilashvili~\cite{Kok:79} for $p\leq q$, and literature records also some unpublished papers, one by Artola, and another one by Boyd and Erd\H{o}s (\cite{Rie:74}). The case $1\le q<p<\infty$ was first characterized by Maz'ya and Rozin (published later in the book~\cite{Maz:11}) and Sawyer~\cite{Saw:84}, the case $0<q<1<p<\infty$ by Sinnamon~\cite{Sin:91}, and the case $0<q<p=1$ by Sinnamon and Stepanov~\cite{SiSt:96}, see also~\cite{BeGr:06}. 

Inequality~\eqref{E:hardy} is often interpreted as the boundedness of the \emph{Hardy operator} $f\mapsto\int_0^tf$ from one weighted Lebesgue space into another. In a similar fashion, other operators have been considered. We can recall, for instance, the \emph{Copson operator} $f\mapsto\int_t^{\infty}f$, the \emph{geometric mean operator} $f\mapsto\exp(\frac1t\int_t^{\infty}\log f)$,
or the \emph{harmonic mean operator} $f\mapsto\ t(\int_0^t\frac1f)^{-1}$. The latter two operators are notably nonlinear, and, moreover, they constitute particular cases of a general operator $\afi$, defined with the help of a strictly monotone function $\vp\colon(0,\infty)\to(0,\infty)$, which is either concave and increasing (such as $\log$), or convex and decreasing (such as $t\mapsto \frac1t$), by
\begin{equation*}
    \afi f(t)=\vp^{-1}\left(\frac{1}{t}\int_{0}^{t}\varphi(f(s))\,ds\right).
\end{equation*} 
Further modifications such as inequalities involving superposition of operators, iterated operators, operators involving kernels and various their combinations were studied separately. For instance, weighted inequalities for the geometric mean operator were characterized e.g.~in~\cite{Pi-Op:94}. Many variants of this operator have been extensively studied, see e.g.~\cite{Ja-Si:00}. Weighted inequalities involving this operator are known to be very important. In the literature they are sometimes called the \textit{Carleman-Knopp inequalities}, as in~\cite{Ja-Pe-We:01}. The operator is known to be in intimate relation with basic inequalities such as the arithmetic-geometric mean inequalities and their applications in approximation theory, see e.g.~\cite{Ka-Pe-Ob:02} or~\cite{Pe-St:02}.

Many authors spent considerable effort in order to characterize boundedness of each of the mentioned operators separately, grappling with their intrinsic technicalities, and applying quite varying approaches in dependence on the parameters involved. While, during many decades of heavy investigation, plenty of information was found about each individual operator, and also about each individual case, very little has been discovered about the interplay between them. To establish certain knowledge of this sort is one of the principal goals of this paper.  

There is one more important feature of our approach that is worth noticing. As far as weighted inequalities such as~\eqref{E:hardy} are concerned, there has always been a general feeling that the cases $0<p\le q<\infty$, $1\le q<p<\infty$, $0<q<p=1$, and $0<q<p<1$, are substantially different from one another, and have to be treated separately.  For example, while only elementary integration and H\"older's and Minkowski's inequalities are required for the case $0<p\le q<\infty$, deep techniques such as the Halperin level function have been used for the case $0<q<p<1$, and, in order to handle yet other cases, variants of duality methods, or the discretization and antidiscretization techniques were brought in. A notable exception is the approach applied in~\cite{SiSt:96}, where a~universal technique is developed for several cases.

Our mission in this paper is different. We take a lateral point of view on the inequalities, one of its principal achievements being the rather surprising observation that all the mentioned cases are linked by elementary transformations, and as such can be easily reduced to one another. Inequalities for Hardy, Copson, geometric mean and harmonic mean operators are shown to be interlinked. No involved techniques such as level function, duality, or discretization, are used. 

We can now state our main result.

\begin{theorem}\label{T:1}
    Assume that $\vr\colon \mathcal M_+(0,\infty)\to [0,\infty]$ is a sub-monotone functional.
    Let $p\in(1,\infty)$ and let $v$ be a weight. Then the following statements are equivalent:
    
    \textup{(i)} there exists a positive constant $C_1$ such that
    \begin{equation}\label{E:1}    
        \vr\left(\frac{1}{V(t)}\int_{0}^{t}f(s)v(s)\,ds\right) \le C_1 \left(\int_{0}^{\infty}f(t)^{p}v(t)\,dt\right)^{\frac{1}{p}}
    \end{equation}
    for every $f\in\M_+(0,\infty)$,
    
    \textup{(ii)} for every $r\in[1,\infty)$ and $\alpha\in (\max\{-\frac 1p, -\frac1{p'}\},\infty)$ there exist positive constants $C_{2,1}$ and $C_{2,2}$ such that
    \begin{equation}\label{E:2}
        \vr\left(
        V(t)^{\alpha }
        \left(\int_{t}^{\infty}f(s)\frac{v(s)}{V(s)}\,ds\right)^{\frac{r}{p}}\right)
        \le C_{2,1} 
        \left(\int_{0}^{\infty}f(t)^{r}V(t)^{\alpha p}v(t)\,dt\right)^{\frac{1}{p}}
    \end{equation}
    for every $f\in\M_+(0,\infty)$,
    and
    \begin{equation}\label{E:constant}
        \varrho(\boldsymbol{1})\le C_{2,2} \left(\int_{0}^{\infty}v(t)\,dt\right)^{\frac{1}{p}},
    \end{equation}
    
    \textup{(iii)} there exist $r\in[1,\infty)$, 
    $\alpha\in (\max\{-\frac 1p, -\frac1{p'}\},\infty)$, and positive constants $C_3$ and $C_{2,2}$ such that
    \begin{equation}\label{E:2-bis}
    \vr\left(V(t)^{\alpha}\left(\int_{t}^{\infty}f(s)\frac{v(s)}{V(s)}\,ds\right)^{\frac{r}{p}}\right)
            \le C_3 \left(\int_{0}^{\infty}f(t)^{r}V(t)^{\alpha p }v(t)\,dt\right)^{\frac{1}{p}}
    \end{equation}
    for every $f\in\M_+(0,\infty)$ and \eqref{E:constant} is satisfied.
    
    \textup{(iv)} for every triple $(r,\alpha,\beta)$ satisfying $r\in[1,\infty)$, 
    $\alpha\in (\max\{-\frac 1p,-\frac1{p'}\},\infty)$, 
    $\beta\in(-\frac{1}{r'},\infty)$,
    \begin{equation}\label{E:condition-iv}
        \alpha p-\beta r<r-1
    \end{equation}
    there exists a positive constant $C_4$ such that
    \begin{equation}\label{E:3}
    \vr\left(\left(\frac{1}{V(t)}\int_{0}^{t}f(s)v(s)\,ds\right)^{\frac{r}{p}}V(t)^{\alpha-\frac{\beta r}{p}} \right)
            \le C_4 \left(\int_{0}^{\infty}f(t)^{r}V(t)^{\alpha p-\beta r}v(t)\,dt\right)^{\frac{1}{p}}
    \end{equation}
    for every $f\in\M_+(0,\infty)$,
    
    \textup{(v)} there exist a~triple $(r,\alpha,\beta)$ satisfying $r\in[1,\infty)$, 
    $\alpha\in (\max\{-\frac 1p,-\frac1{p'}\},\infty)$, 
    $\beta\in(-\frac{1}{r'},\infty)$,  and such that~\eqref{E:condition-iv} holds, and a positive constant $C_5$ such that
    \begin{equation}\label{E:3-bis}
        \vr\left(\left(\frac{1}{V(t)}\int_{0}^{t}f(s)v(s)\,ds\right)^{\frac{r}{p}}V(t)^{\alpha - \frac{\beta r}{p}} \right)
            \le C_5 \left(\int_{0}^{\infty}f(t)^{r}V(t)^{\alpha p-\beta r}v(t)\,dt\right)^{\frac{1}{p}}
    \end{equation}
    for every $f\in\M_+(0,\infty)$,
    
    \textup{(vi)} there exists a positive constant $C_6$ such that
    \begin{equation}\label{E:4}
        \vr\left(\exp\left( \frac{1}{V(t)}\int_{0}^{t}\log(f(s))v(s)\,ds\right)\right)
        \le C_6
        \left(\int_{0}^{\infty}f(t)^{p}v(t)\,dt\right)^{\frac{1}{p}}
    \end{equation}
    for every measurable strictly positive function $f$ on $(0,\infty)$,
    
    \textup{(vii)} for every $r\in(0,\infty)$  there exist a positive constant $C_7$ such that
    \begin{equation}\label{E:5}
     \vr\left(\left(\frac{V(t)}{\int_{0}^{t}f(s)^{-1}v(s)\,ds}\right)^r\right)
      \le C_7 \left(\int_{0}^{\infty}f(t)^{rp}v(t)\,dt\right)^{\frac{1}{p}}
    \end{equation}
    for every measurable strictly positive function $f$ on $(0,\infty)$,
    
    \textup{(viii)} there exist $r\in(0,\infty)$  and  a positive constant $C_8$ such that
    \begin{equation}\label{E:5-bis}
     \vr\left(\left(\frac{V(t)}{\int_{0}^{t}f(s)^{-1}v(s)\,ds}\right)^r\right)
      \le C_8 \left(\int_{0}^{\infty}f(t)^{rp}v(t)\,dt\right)^{\frac{1}{p}}
    \end{equation}
    for every measurable strictly positive function $f$ on $(0,\infty)$,
    
    \textup{(ix)} Fix a function $\varphi\colon (0,\infty) \to (-\infty,\infty)$ that is either concave and non-decreasing or else
    convex and non-increasing. There exists  a positive constant $C_9$ such that
    \begin{equation}\label{E:9-bis}
    \vr\left(\varphi^{-1}\left(\frac{1}{V(t)}\int_{0}^{t}\varphi(f(s))v(s)\,ds\right)
     \right)
     \le C_9 \left(\int_{0}^{\infty}f(t)^{p}v(t)\,dt\right)^{\frac{1}{p}}
    \end{equation}
    for every measurable strictly positive function $f$ on $(0,\infty)$.
\end{theorem}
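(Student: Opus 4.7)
My plan is to establish the nine-fold equivalence by two main tools: Jensen's inequality, which links the arithmetic mean appearing in (i) with the general $\vp$-mean in (ix) and its specialisations (vi), (vii); and a change of variable $\tau=V(s)$ together with power substitutions, which reduce (ii), (iv) and their existence counterparts (iii), (v) to variants of (i). Throughout, the sub-monotone axioms \eqref{E:quasitriangle}, \eqref{E:weak-lattice} stand in for a triangle inequality and permit scalar perturbations to be absorbed.

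\textit{Jensen leg.} For any $\vp$ admissible in (ix), Jensen's inequality applied to the probability measure $v(s)\,ds/V(t)$ on $(0,t)$ gives the pointwise bound
\begin{equation*}
\vp^{-1}\Bigl(\tfrac{1}{V(t)}\int_0^t\vp(f)\,v\Bigr)\le \tfrac{1}{V(t)}\int_0^t f\,v,
\end{equation*}
so monotonicity \eqref{E:lattice} of $\vr$ sends (i) to (ix); conversely (ix) with $\vp(x)=x$ returns (i), whence (i)$\Leftrightarrow$(ix). Specialising $\vp=\log$ and $\vp(x)=x^{-1/r}$ in (ix) yields (vi) and (vii) for each $r>0$ (after the cosmetic substitution $f\mapsto f^{1/r}$ in the harmonic case). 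The reverse flow (vi)$\Rightarrow$(vii) relies on the pointwise estimate $\mathrm{HM}\le\mathrm{GM}$ (Jensen for $\log$) together with $\mathrm{GM}(f^r)=\mathrm{GM}(f)^r$, and (vii)$\Rightarrow$(viii) is immediate.

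\textit{Hardy/Copson leg.} Taking $r=p$, $\alpha=\beta=0$ in (iv) (admissible since $p>1$) recovers (i), so (iv)$\Rightarrow$(i); similarly (ii)$\Rightarrow$(i) via $r=p$, $\alpha=0$. For (i)$\Rightarrow$(iv), the change of variable $\tau=V(s)$ reduces all three statements to their $v=1$ versions on $(0,V(\infty))$, preserving sub-monotonicity in the new variable; in this Lebesgue setting, a power substitution $f(\tau)\mapsto g(\tau)\,\tau^{c}$, with $c$ chosen to balance the weight $\tau^{\alpha p-\beta r}$ on the $L^p$-side against $\tau^{\alpha-\beta r/p}$ on the $\vr$-side, converts (i) into (iv). The admissibility constraints $\alpha>\max\{-1/p,-1/p'\}$, $\beta>-1/r'$, and $\alpha p-\beta r<r-1$ appear precisely as the integrability thresholds of the auxiliary integrals produced. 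A subsequent inversion $\sigma=1/\tau$ carries (iv) to (ii). The existence versions (iii), (v) are upgraded to the universal ones by the same substitution machinery starting from a single admissible triple; the residual scalar terms are captured by \eqref{E:constant} and absorbed via \eqref{E:quasitriangle}, \eqref{E:weak-lattice}.

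\textit{Closing the cycle — main obstacle.} What remains is the return implication (viii)$\Rightarrow$(i). After the relabelling $f\mapsto h^{1/r_0}$, (viii) becomes $\vr\bigl(M_{-1/r_0}(h)\bigr)\le C_8(\int_0^\infty h^p v)^{1/p}$ where $M_{-1/r_0}(h):=\bigl(\tfrac{1}{V(t)}\int_0^t h^{-1/r_0}v\bigr)^{-r_0}$ is a negative-power mean, pointwise dominated by the arithmetic mean, so (i) does not follow by monotonicity alone. My plan is to first extract (vi) by iterating (viii) along the perturbed family $h_\lambda=h+\lambda\boldsymbol{1}$ and exploiting the limit $M_{-1/r_0}(h)\to\mathrm{GM}(h)$ as the exponent tends to $0$; axiom \eqref{E:quasitriangle} controls the $\boldsymbol{1}$-terms through \eqref{E:constant}. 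Once (vi) is in hand, the Jensen leg completes the cycle through (vi)$\Rightarrow$(ix)$\Rightarrow$(i). This is the step I expect to require the most delicate interplay with the sub-monotone axioms.
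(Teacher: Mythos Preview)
Your outline has several genuine gaps, the most serious of which concern exactly the two legs you call ``main.''

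\textbf{The closing step (viii)$\Rightarrow$(i).} In (viii) the exponent $r$ is \emph{fixed}: the hypothesis says only that \emph{some} $r_0>0$ works. Your plan to ``exploit the limit $M_{-1/r_0}(h)\to\mathrm{GM}(h)$ as the exponent tends to $0$'' therefore cannot be carried out---you have no control over small exponents. The paper instead proves (viii)$\Rightarrow$(iii) by a direct choice of test function: set $f(t)=\bigl(\int_t^\infty h\,v/V\bigr)^{1/(pr)}$; since $f$ is nonincreasing, $\int_0^t f^{-1}v\le f(t)^{-1}V(t)$, so $(V(t)/\int_0^t f^{-1}v)^r\ge f(t)^r$, which is exactly the Copson expression in (iii) with $r=1$, $\alpha=0$. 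The right-hand side collapses by Fubini. No limit is needed.

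\textbf{The direct route (i)$\Rightarrow$(iv).} The change of variable $\tau=V(s)$ indeed reduces to $v\equiv1$, but your ``power substitution $f\mapsto g\tau^c$'' cannot produce the exponent $r/p$ \emph{inside} $\varrho$: substituting in (i) leaves the arithmetic average raised to the first power, not the $(r/p)$-th. Defining a new functional $\varrho'(g)=\varrho(g^{r/p}\cdot V^{\alpha-\beta r/p})$ does not help, because $\varrho'(\lambda g)\le K\lambda^{r/p}\varrho'(g)$, which violates subhomogeneity~\eqref{E:weak-lattice} unless $r=p$. The paper circumvents this by passing through (vi): since the geometric mean satisfies $\mathrm{GM}(f^\gamma)=\mathrm{GM}(f)^\gamma$ and $\mathrm{GM}(f\cdot V^\alpha)=\mathrm{GM}(f)\cdot e^{-\alpha}V^\alpha$ (up to a constant), applying (vi) to $f=\bigl(\int_s^\infty h\,v/V\bigr)^{r/p}V(s)^\alpha$ and then the Copson inequality~\eqref{E:copson-weighted} yields (ii). The step (ii)$\Rightarrow$(iv) is then a separate, genuinely technical argument using~\eqref{E:hardy-weighted}, the splitting~\eqref{E:dgamma-inequality}, and the bound~\eqref{E:constant} on $\varrho(\boldsymbol 1)$ to absorb the boundary term at $V(\infty)$.

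\textbf{Two smaller issues.} First, ``(ix) with $\varphi(x)=x$ returns (i)'' misreads the quantification: in (ix) the function $\varphi$ is fixed in advance, so you must show (ix)$\Rightarrow$(something) for an \emph{arbitrary} admissible $\varphi$, not one of your choosing. The paper does this via (ix)$\Rightarrow$(iii), using that $\varphi$ is monotone of the stated type so that $\varphi^{-1}(\frac1V\int_0^t\varphi(f)v)\ge f(t)$ whenever $f$ is nonincreasing. Second, ``(ii)$\Rightarrow$(i) via $r=p$, $\alpha=0$'' yields the Copson form $\varrho(\int_t^\infty f\,v/V)\le C(\int f^pv)^{1/p}$, not the Hardy average in (i); the actual return (iii)$\Rightarrow$(i) in the paper requires the decomposition $V(t)^{-1}=V(t)^\alpha[V(t)^{-(\alpha+1)p/r}]^{r/p}$ and a careful splitting into a Copson term plus a scalar handled by~\eqref{E:constant}.
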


\begin{remark}
    The proof of Theorem~\ref{T:1} will be given in Section~\ref{S:proofs}, and it is organized as follows. We will first establish a cobweb of implications pouring down from (i) to (iii) through each of the remaining ones, more precisely,
    \begin{align*}
        &\textup{(i)}
        \Rightarrow\textup{(vi)}
        \Rightarrow\textup{(ii)}
        \Rightarrow\textup{(iii)},
                    \\
        &\textup{(ii)}
        \Rightarrow\textup{(iv)}
        \Rightarrow\textup{(v)}
        \Rightarrow\textup{(iii)},    
                \\
        &\textup{(vi)}
        \Rightarrow\textup{(vii)}
        \Rightarrow\textup{(viii)}
        \Rightarrow\textup{(iii)},    
            \\
        &\textup{(i)}
        \Rightarrow\textup{(ix)}
        \Rightarrow\textup{(iii)}. 
            \intertext{Finally, we will round off the picture by showing that}
        &\textup{(iii)}
        \Rightarrow\textup{(i)}   . 
    \end{align*}
    
    Some observations are due. There are certainly numerous other possibilities of how to organize the proof. However, one of our principal goals is not only to establish the result itself, but also to point out interconnections between separate statements (i)--(ix) of Theorem~\ref{T:1}. Therefore, for example, we do not treat statements (vi), (vii) and (viii) as particular cases of (ix) (even though they clearly are) because, by avoiding that, we spare a reader, who might be possibly interested for instance in equivalence of inequality for the geometric mean operator to that for a Hardy or Copson operator, resorting necessarily to a statement like (ix) that involves some general function $\varphi$. Moreover, our approach to the proof does not conceal the short direct arguments that can be used to obtain equivalences of these separate statements, which are certainly of independent interest. 

    In the scheme given above, the implication \textup{(ii)}
        $\Rightarrow$\textup{(iii)} is obviously redundant. However, since this implication is completely trivial as it follows just from the quantification of relevant parameters (similarly as the implications \textup{(iv)}
        $\Rightarrow$\textup{(v)} and \textup{(vii)}
        $\Rightarrow$\textup{(viii)}), it does not really matter.

\end{remark}

\begin{remark}
    There are several applications of Theorem~\ref{T:1} that should not be missed. First, it shows that the results of Sinnamon~\cite{Sin:91}, Maz'ya--Rozin~\cite{Maz:11} and Sinnamon--Stepanov~\cite{SiSt:96} can be easily reduced one to another. Next, the result shows that the inequalities involving nonlinear operators such as the geometric and harmonic mean operators can be, once again, reduced to classical Hardy inequalities. Last but not least, Hardy and Copson inequalities are reducible to one another.
    
    Roughly speaking, once we know a characterization of some (easy, say) inequality, such as~\eqref{E:hardy} for $p\le q$, we can, by a simple application of Theorem~\ref{T:1}, obtain a characterization of a seemingly much more difficult inequality, involving either a different relation between parameters, or some other operator, or both.
\end{remark}

We will present two more theorems, which can be understood as characterizations of inequalities for the geometric mean operator, and for the harmonic mean operator, in terms of ordinary Hardy inequalities, for which it is easy to single out the class of pairs of weight for which these inequalities hold.

\begin{theorem}\label{T:3} Assume that $\mathcal M_+(0,\infty)$ is a partially ordered vector space and $\vr\colon \mathcal M_+(0,\infty)\to [0,\infty]$ is a sub-monotone functional.
Let $p\in(0,\infty)$, and let $v, u$ be weights. Denote by $U(t)=\int_{0}^{t}u(s)\,ds$ and  
\begin{equation*}
    w(t)=\exp\left(\frac{1}{p U(t)}\int_{0}^{t}\log\left(\frac{u(s)}{v(s)}\right)u(s)\,ds\right)
\end{equation*} for $t\in(0,\infty)$.

Then the following statements are equivalent:

\textup{(i)} there exists a positive constant $C_{10}$ such that
    \begin{equation}\label{E:10}    
        \vr\left(\exp\left( \frac{1}{U(t)}\int_{0}^{t}\log(f(s))u(s)\,ds\right)\right)
        \le C_{10} \left(\int_{0}^{\infty}f(t)^{p}v(t)\,dt\right)^{\frac{1}{p}}
    \end{equation}
    for every $f\in\M_+(0,\infty)$,

\textup{(ii)} let $r\in [1,\infty)$, $m\in[\frac{1}{p},\infty)$  and $\alpha\in (\max\{-\frac{1}{mp}, -\frac1{(mp)'}\},\infty)$,  then there exists a positive constant $C_{11}$ such that
    \begin{equation*}
       \vr \left(\left(\int_{t}^{\infty}f(s)\frac{u(s)\,ds}{U(s)}\right)^{\frac rp}U(t)^{\alpha m}w(t)\right)
                \le C_{11} \left(\int_{0}^{\infty}f(t)^{r}U(t)^{\alpha mp}u(t)\,dt\right)^{\frac{1}{p}} 
    \end{equation*}
    for every $f\in\M_+(0,\infty)$,

\textup{(iii)}  if $r\in [1,\infty)$,  $m\in (\frac{1}{p},\infty)$  and $\alpha\in (\max\{-\frac{1}{mp}, -\frac1{(mp)'}\},\infty)$,  $\beta\in(-1,\infty)$, and  such that 
$$\alpha mp - \beta r< r-1,$$ 
then there exists a positive constant $C_{12}$ such that
    \begin{equation*}  
    \vr\left(\left(\frac{1}{U(t)}\int_{0}^{t}f(s)v(s)\,ds\right)^{\frac{r}{p}}U(t)^{\alpha m-\frac{\beta r}{p}} w(t)\right)
        \le C_{12} \left(\int_{0}^{\infty}f(t)^{r}U(t)^{\alpha p m-\beta r}u(t)\,dt\right)^{\frac{1}{p}}     \end{equation*}
    for every $f\in\M_+(0,\infty)$.
\end{theorem}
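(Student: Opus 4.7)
The plan is to reduce Theorem~\ref{T:3} to Theorem~\ref{T:1} by combining a weight-rescaling substitution with the introduction of an auxiliary functional. Specifically, in statement~(i), perform the change of variable $f(s)\mapsto f(s)(u(s)/v(s))^{1/p}$. Under this substitution, the integrand on the right-hand side transforms via $f^p v\mapsto f^p u$, and the argument of $\vr$ on the left-hand side picks up exactly the multiplicative factor
\[
    \exp\Bigl(\frac{1}{pU(t)}\int_{0}^{t}\log\bigl(u(s)/v(s)\bigr)u(s)\,ds\Bigr)=w(t),
\]
thanks to additivity of the logarithm inside the geometric mean. Therefore statement~(i) is equivalent to
\[
    \vr\Bigl(w(t)\,\exp\bigl(\tfrac{1}{U(t)}\textstyle\int_0^t\log f\cdot u\bigr)\Bigr)\le C_{10}\Bigl(\textstyle\int_0^\infty f^p u\Bigr)^{1/p},
\]
which, upon defining the auxiliary functional $\sigma(h):=\vr(w\,h)$, is Theorem~\ref{T:1}\,(vi) for $\sigma$ and the weight $u$. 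A direct inspection with $\gamma=\alpha m$ identifies~(ii) with Theorem~\ref{T:1}\,(ii) for $\sigma$, weight $u$, and parameter $\gamma$, and, after an analogous substitution inside the Hardy operator to reconcile the two weights, identifies~(iii) with Theorem~\ref{T:1}\,(iv) for $\sigma$.

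Granted these identifications, the equivalences (i)$\Leftrightarrow$(ii)$\Leftrightarrow$(iii) in Theorem~\ref{T:3} will follow from Theorem~\ref{T:1} applied to $\sigma$, and I intend to implement them in the natural order (i)$\Rightarrow$(ii)$\Rightarrow$(iii)$\Rightarrow$(i), mirroring the cobweb used in the proof of Theorem~\ref{T:1}. Two technical adjustments are needed. First, Theorem~\ref{T:3} permits $p\in(0,\infty)$, whereas Theorem~\ref{T:1} requires $p>1$; to handle $p\le 1$ I plan to apply Theorem~\ref{T:1} with the replacement exponent $\tilde p:=mp$, which is at least $1$ in~(ii) and strictly greater than $1$ in~(iii), compensating through the pointwise substitution $f\mapsto f^{1/m}$ to match the integrand exponents. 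Second, the admissible range $\alpha>\max\{-1/(mp),-1/(mp)'\}$ in~(ii)--(iii) exceeds, after writing $\gamma=\alpha m$, what Theorem~\ref{T:1} alone provides for a single value of~$m$; this extra range is recovered by letting $m$ vary over the prescribed interval and using monotonicity of $\sigma$ to propagate the inequality to the boundary values.

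The principal obstacle is the verification that $\sigma$ is sub-monotone in the sense of~\eqref{E:lattice}--\eqref{E:weak-lattice}. The monotonicity~\eqref{E:lattice} and the sub-homogeneity~\eqref{E:weak-lattice} are inherited from $\vr$ because $w$ is a fixed positive function. The quasi-triangle~\eqref{E:quasitriangle} for $\sigma$, however, amounts to the assertion $\vr(wh+w)\le K'(\vr(wh)+\vr(w))$, i.e.\ the quasi-triangle of $\vr$ \emph{with $w$ in place of $\boldsymbol{1}$}, which is not furnished by the axiomatic definition. The plan is to isolate the specific uses of~\eqref{E:quasitriangle} in the implications of Theorem~\ref{T:1} that are actually invoked, and to reproduce each such step by hand for $\vr$ itself, exploiting the factorization $wh+w=w(h+\boldsymbol{1})$ together with~\eqref{E:quasitriangle} applied to $h+\boldsymbol{1}$. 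The careful bookkeeping of constants through this substitute argument, along with the verification that the auxiliary substitutions for~(ii) and especially~(iii) leave the RHS $u$-weighted integrals in the exact form appearing in Theorem~\ref{T:3}, constitutes the substantive content of the proof.
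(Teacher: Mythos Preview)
Your approach is essentially the one the paper takes: after the substitution $f\mapsto f(u/v)^{1/p}$, the weight $w$ appears as a multiplicative factor on the left and the right-hand integrand becomes $f^p u$, so the problem reduces to Theorem~\ref{T:1} applied to an auxiliary functional built from $\vr$ and $w$, with $u$ playing the role of the weight and $mp$ the role of the exponent. The paper packages this slightly more compactly by putting the $m$-th power into the functional as well, setting $\vr_{m,w}(h)=\vr(h^{m}w)^{1/m}$ and invoking Theorem~\ref{T:1} directly with exponent $mp$; your separate substitution $f\mapsto f^{1/m}$ achieves the same thing but with more moving parts.

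Where your write-up goes beyond the paper is in flagging the quasi-triangle axiom~\eqref{E:quasitriangle} for the auxiliary functional. You are right to worry: for $\sigma(h)=\vr(wh)$ (or $\vr_{m,w}$), the required inequality is $\vr(wh+w)\le K'(\vr(wh)+\vr(w))$, and the axioms for $\vr$ do not furnish this in general. The paper's two-line proof simply does not address this point. Your proposed remedy, however, is not convincing as stated: the factorization $wh+w=w(h+\boldsymbol{1})$ does not help, because applying~\eqref{E:quasitriangle} to $h+\boldsymbol{1}$ gives information about $\vr(h+\boldsymbol{1})$, not about $\vr(w(h+\boldsymbol{1}))$, and there is no axiom letting you extract the factor $w$ from inside $\vr$. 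If you want to close this gap you should instead trace through the two implications of Theorem~\ref{T:1} that actually use~\eqref{E:general-lambda} (namely (ii)$\Rightarrow$(iv) and (iii)$\Rightarrow$(i)), observe that the additive constant there carries the factor $U(\infty)^{-1/(mp)}$, and argue separately that the term vanishes when $U(\infty)=\infty$; the residual case $U(\infty)<\infty$ still needs a genuine argument, and ``reproducing the step by hand for $\vr$'' does not, by itself, supply one. Also note that your replacement exponent $\tilde p=mp$ must be strictly greater than $1$ to invoke Theorem~\ref{T:1}, so the boundary value $m=1/p$ in part~(ii) requires separate treatment.
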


\begin{theorem}\label{T:4}
Let $q,p, m\in(0,\infty)$, and $mp>1$, $\alpha, \beta \in (-1,\infty)$ and let $v,w, u$ be weights. Denote by $U(t)=\int_{0}^{s}u(s)\,ds$, $\widetilde{u}(t)=u(t)^{\frac{p}{p+1}}v(t)^{\frac{1}{p+1}}$ and $\widetilde{U}(t)=\int_0^t  u(s)^{\frac{p}{p+1}}v(s)^{\frac{1}{p+1}}\,ds$. Then the following statements are equivalent:

\textup{(i)} there exists a positive constant $C_{13}$ such that
    \begin{equation}\label{E:13}    
        \vr\left(\frac{U(t)}{\int_{0}^{t}f(s)^{-1}u(s)\,ds}\right)
        \le C_{13} \left(\int_{0}^{\infty}f(t)^{p}v(t)\,dt\right)^{\frac{1}{p}} 
    \end{equation}
    for every $f\in\M_+(0,\infty)$,

\textup{(ii)} there exists a positive constant $C_{14}$ such that
    \begin{equation*}  
        \vr\left(\left(\int_{t}^{\infty}f(t)\frac{\widetilde{u}(t)\,dt}{\widetilde{U}(t)}\right)^{m}
        U(t)\widetilde{U}(t)^{\alpha m-1}\right)
        \le C_{14} \left(\int_{0}^{\infty}f(t)^{mp}\widetilde{U}(t)^{\alpha  mp}\widetilde{u}(t)\,dt\right)^{\frac{1}{p}}
    \end{equation*}
    for every $f\in\M_+(0,\infty)$,

\textup{(iii)} there exists a positive constant $C_{15}$ such that
    \begin{equation*}   
         \vr \left(\left(\frac{1}{\widetilde{U}(t)}\int_{0}^{t}f(t)\widetilde{u}(t)\,dt\right)^{m}
          U(t)\widetilde{U}(t)^{-(\beta -\alpha)m-1}\right)
        \le C_{15} \left(\int_{0}^{\infty}f(t)^{mp}\widetilde{U}(t)^{-(\beta -\alpha)mp}  \widetilde{u}(t)\,dt\right)^{\frac{1}{p}} 
    \end{equation*}
    for every $f\in\M_+(0,\infty)$.
\end{theorem}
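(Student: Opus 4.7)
The plan is to reduce Theorem~\ref{T:4} to Theorem~\ref{T:1} by means of a weight-unifying substitution combined with a redefinition of the sub-monotone functional, following the same blueprint that was used for Theorem~\ref{T:3}. First I would perform the substitution $f = g\,(u/v)^{1/(p+1)}$ in the inequality~\eqref{E:13}. A direct computation using $\widetilde{u}(s)=u(s)^{p/(p+1)}v(s)^{1/(p+1)}$ yields
\[
f(s)^{-1}u(s) = g(s)^{-1}\widetilde{u}(s)\quad\text{and}\quad f(s)^{p}v(s) = g(s)^{p}\widetilde{u}(s),
\]
so that~(i) is equivalent to the existence of $C>0$ with
\[
\vr\!\left(\frac{U(t)}{\int_{0}^{t}g(s)^{-1}\widetilde{u}(s)\,ds}\right)\le C\left(\int_{0}^{\infty}g(t)^{p}\widetilde{u}(t)\,dt\right)^{1/p}
\qquad\text{for every } g\in\M_+(0,\infty).
\]

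Next I would introduce the auxiliary functional $\widetilde{\vr}(F):=\vr\bigl((U/\widetilde{U})F\bigr)$. Because $U/\widetilde{U}$ is a fixed positive measurable function, $\widetilde{\vr}$ inherits the properties~\eqref{E:lattice}--\eqref{E:weak-lattice} from $\vr$: monotonicity and subhomogeneity transfer verbatim, while the quasi-triangle axiom~\eqref{E:quasitriangle} for $\widetilde{\vr}$ follows by applying~\eqref{E:quasitriangle} for $\vr$ pointwise to $(U/\widetilde{U})(F+\boldsymbol{1})$. Rewriting the transformed inequality as
\[
\widetilde{\vr}\!\left(\frac{\widetilde{U}(t)}{\int_{0}^{t}g(s)^{-1}\widetilde{u}(s)\,ds}\right)\le C\left(\int_{0}^{\infty}g(t)^{p}\widetilde{u}(t)\,dt\right)^{1/p}
\]
exhibits~(i) as condition~(viii) of Theorem~\ref{T:1} applied to the functional $\widetilde{\vr}$, the weight $\widetilde{u}$, the exponent $p$, and $r=1$.

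From this reduction I would invoke the equivalences within Theorem~\ref{T:1}. The implication (viii)$\Leftrightarrow$(ii), taken with the choice $r'=mp$ and $\alpha'=\alpha m$, and then unfolded via the identity $\widetilde{\vr}(G)=\vr((U/\widetilde{U})G)$, produces the factor $U(t)\widetilde{U}(t)^{\alpha m-1}$ on the left and $\widetilde{U}(t)^{\alpha mp}\widetilde{u}(t)$ under the integral on the right, that is, precisely~(ii) of Theorem~\ref{T:4}. In the same fashion, the equivalence (viii)$\Leftrightarrow$(iv) of Theorem~\ref{T:1} with $r'=mp$, $\alpha'=\alpha m$, and $\beta'=\beta$, after unfolding, produces~(iii).

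The main obstacle I anticipate is the careful verification of admissibility of the chosen parameters: one needs $r'=mp\in[1,\infty)$ (which is forced by the hypothesis $mp>1$), $\alpha'=\alpha m\in(\max\{-\tfrac{1}{p},-\tfrac{1}{p'}\},\infty)$, $\beta'=\beta\in(-\tfrac{1}{(r')'},\infty)$, and the threshold condition~\eqref{E:condition-iv}, which reduces here to $(\alpha-\beta)mp<mp-1$. Matching the liberal ranges $\alpha,\beta\in(-1,\infty)$ of Theorem~\ref{T:4} against the sharper ranges required by Theorem~\ref{T:1} is the delicate calibration; alongside this, one has to justify that $\widetilde{\vr}$ really is sub-monotone even with the non-constant multiplier $U/\widetilde{U}$, in particular that the finiteness condition $\widetilde{\vr}(\boldsymbol{1})=\vr(U/\widetilde{U})<\infty$ needed to trigger~\eqref{E:quasitriangle} is implicitly present whenever any of (i)--(iii) holds. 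These two calibration issues are the technical heart of the proof.
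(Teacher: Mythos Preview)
Your overall plan—the substitution $f=g(u/v)^{1/(p+1)}$ that collapses both sides onto the single weight $\widetilde u$, followed by a redefinition of the functional so that Theorem~\ref{T:1} can be invoked—is precisely the route the paper takes, and the algebra you record is correct. The genuine gap is in the \emph{choice} of auxiliary functional and, as a consequence, in the exponent with which you call Theorem~\ref{T:1}. You set $\widetilde\vr(F)=\vr\bigl((U/\widetilde U)\,F\bigr)$ and then apply Theorem~\ref{T:1} with the \emph{original} exponent $p$. But Theorem~\ref{T:1} is proved only for its exponent lying in $(1,\infty)$, whereas Theorem~\ref{T:4} assumes merely $p,m\in(0,\infty)$ with $mp>1$; the value $p$ itself is allowed to be $\le 1$, and in that range your appeal to Theorem~\ref{T:1} is simply unavailable. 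The paper avoids this by absorbing the power $m$ into the functional as well: it sets
\[
\vr_m(g)=\vr\!\left(g^{m}\,\frac{U}{\widetilde U}\right)^{1/m},
\]
which recasts the transformed harmonic-mean inequality as condition~(viii) of Theorem~\ref{T:1} for $\vr_m$, weight $\widetilde u$, exponent $mp$, and $r=1/m$. Since $mp>1$ is exactly the hypothesis, Theorem~\ref{T:1} now applies legitimately; unfolding its conditions~(ii) and~(iv) for $\vr_m$ (with $r=mp$ and the obvious $\alpha$, $\beta$) then yields parts~(ii) and~(iii) of Theorem~\ref{T:4}, along the lines you describe.

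A secondary point: your verification of~\eqref{E:quasitriangle} for $\widetilde\vr$ is not correct as written. The axiom for $\vr$ only controls $\vr(f+\boldsymbol{1})$, i.e.\ addition of the constant function, and says nothing about $\vr\bigl((U/\widetilde U)F + U/\widetilde U\bigr)$; ``applying~\eqref{E:quasitriangle} pointwise'' does not produce $\widetilde\vr(F+\boldsymbol{1})\le K\bigl(\widetilde\vr(F)+\widetilde\vr(\boldsymbol{1})\bigr)$. The same difficulty is present for the paper's $\vr_m$, and you were right to flag it as one of the two genuine technical checks; but it needs a real argument (or a mild strengthening of the sub-monotonicity axioms), not the line you give. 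Your calibration worry about matching the ranges $\alpha,\beta\in(-1,\infty)$ against the sharper constraints in Theorem~\ref{T:1} is also well placed and is not resolved by the paper's sketch either.
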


The proofs of the theorems are collected in the following section. It is worth noticing that we use only elementary means such as basic integration, Jensen's and H\"older's inequalities, and the simplest version of the weighted Hardy inequality.

The approach to the inequalities carried out in Theorem~\ref{T:1} can be applied also to cones of monotone functions. This sort of problems goes beyond the scope of the present paper and we plan to return to it in our forthcoming work.

Throughout, expressions such as $0\cdot \infty$, $\frac{0}{0}$, $\frac{\infty}{\infty}$, $\exp(\log(0))$ etc., are treated as zero.

\section{Proofs}\label{S:proofs}

Let us first recall two useful inequalities that reflect the boundedness of weighted Hardy operator and weighted Copson operator on weighted Lebesgue spaces in a form which suits our purposes. 

\begin{proposition}\label{P:weighted-hardy-copson}
    Assume that $v$ is a weight on $(0,\infty)$ and let $V(t)=\int_{0}^{t}v(s)\,ds$ for $t\in(0,\infty)$. 
    
    \textup{(i)} For every $p\in[1,\infty)$ and $\alpha\in(-\infty,p-1)$, there exists a constant $A_{p,\alpha}$ depending only on $p$ and $\alpha$ such that
    \begin{equation}\label{E:hardy-weighted}
        \left(\int_{0}^{\infty} \left(\frac{1}{V(t)}\int_{0}^{t}h(s)v(s)\,ds\right)^{p} V(t)^{\alpha}v(t)\,dt\right)^{\frac{1}{p}}
        \le A_{p,\alpha} \left(\int_{0}^{\infty}h(t)^{p}V(t)^{\alpha}v(t)\,dt\right)^{\frac{1}{p}}.
    \end{equation}
    
    \textup{(ii)} For every $p\in[1,\infty)$ and $\alpha\in(-1,\infty)$, there exists a constant $B_{p,\alpha}$ depending only on $p$ and $\alpha$ such that
    \begin{equation}\label{E:copson-weighted}
        \left(\int_{0}^{\infty} \left(\int_{t}^{\infty}h(s)\frac{v(s)}{V(s)}\,ds\right)^{p} V(t)^{\alpha}v(t)\,dt\right)^{\frac{1}{p}}
        \le B_{p,\alpha} \left(\int_{0}^{\infty}h(t)^{p}V(t)^{\alpha}v(t)\,dt\right)^{\frac{1}{p}}
    \end{equation}
    for every $h\in\M_+(0,\infty)$.
\end{proposition}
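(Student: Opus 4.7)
The plan is to reduce both inequalities to the classical power-weight Hardy and Copson inequalities on $(0,V(\infty))$ by the monotone change of variable $\tau=V(t)$, $d\tau=v(t)\,dt$. With $\tilde h(\tau)=h(V^{-1}(\tau))$, part~(i) becomes
\[
    \int_0^{V(\infty)}\left(\frac{1}{\tau}\int_0^{\tau}\tilde h(\sigma)\,d\sigma\right)^{p}\tau^{\alpha}\,d\tau
    \le A_{p,\alpha}^{p}\int_0^{V(\infty)}\tilde h(\tau)^{p}\tau^{\alpha}\,d\tau,
\]
which is the classical weighted Hardy inequality, valid for $p\in[1,\infty)$ and $\alpha<p-1$; part~(ii) becomes the companion Copson inequality on $(0,V(\infty))$, valid for $\alpha>-1$. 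Both reductions produce constants depending only on $p$ and $\alpha$, as required.

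In keeping with the authors' stated preference for elementary means, I would then give a direct proof by integration by parts combined with H\"older's inequality. For~(i), set $H(t)=\int_0^{t}h(s)v(s)\,ds$ and write $V(t)^{\alpha-p}v(t)\,dt=\tfrac{1}{\alpha-p+1}\,d(V(t)^{\alpha-p+1})$. Since $\alpha-p+1<0$, working first on a truncated interval $(a,b)$ and then letting $a\downarrow 0$, $b\uparrow\infty$, one boundary term carries the favourable sign and can be dropped as an upper bound, while the other vanishes in the limit (checked via a H\"older estimate for $H(a)$ using $\alpha<p-1$). This yields
\[
    \int_0^\infty H(t)^{p}V(t)^{\alpha-p}v(t)\,dt
    \le\frac{p}{p-1-\alpha}\int_0^\infty V(t)^{\alpha-p+1}H(t)^{p-1}h(t)v(t)\,dt.
\]
Applying H\"older with conjugate exponents $p$ and $p/(p-1)$ to the factorisation
\[
    V^{\alpha-p+1}H^{p-1}hv=\bigl(hV^{\alpha/p}v^{1/p}\bigr)\bigl(H^{p-1}V^{(\alpha-p)(p-1)/p}v^{(p-1)/p}\bigr)
\]
then lets me absorb a factor $\bigl(\int H^{p}V^{\alpha-p}v\bigr)^{(p-1)/p}$ on the left, yielding $A_{p,\alpha}=p/(p-1-\alpha)$. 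Part~(ii) is entirely parallel with $G(t)=\int_t^\infty h(s)v(s)/V(s)\,ds$ (so that $G'(t)=-h(t)v(t)/V(t)$) and the identity $V^{\alpha}v\,dt=\tfrac{1}{\alpha+1}\,dV^{\alpha+1}$; the condition $\alpha>-1$ plays the role previously played by $\alpha<p-1$, and the same H\"older split delivers $B_{p,\alpha}=p/(\alpha+1)$. The boundary case $p=1$ in either part collapses to a plain Fubini calculation without any H\"older step.

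The main obstacle I foresee is not the central algebra but the careful handling of the boundary terms in the integration by parts, namely identifying which of the two has the sign that makes it harmlessly non-positive. This should be routinely manageable under the standing convention explicitly recorded in the paper (that $0\cdot\infty$ and similar indeterminate forms equal zero), together with a standard truncation-and-monotone-convergence argument allowing one to assume a priori that the right-hand side is finite, since otherwise the inequality is vacuous.
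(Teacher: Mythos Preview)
The paper does not prove this proposition: the proof is explicitly omitted as well known, with a pointer to the Kufner--Maligranda--Persson monograph for $p\in(1,\infty)$ and the remark that $p=1$ follows from Fubini's theorem. Your proposal is correct and supplies exactly the standard argument behind that reference---the change of variable $\tau=V(t)$ reducing to the classical power-weight case, and the direct integration-by-parts plus H\"older computation with the boundary-term checks you describe, yielding the expected constants $p/(p-1-\alpha)$ and $p/(\alpha+1)$; your remark that $p=1$ collapses to a Fubini calculation matches the paper's own comment.
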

We omit the proof of Proposition~\ref{P:weighted-hardy-copson}, because the result is well known. For $p\in(1,\infty)$, see e.g.~\cite[Theorem 4 and its dual version]{Kuf:07}, for $p=1$ it follows by a simple use of Fubini's theorem. It should be noted that the constants in Proposition~\ref{P:weighted-hardy-copson} do not depend on $v$.

For $\gamma\in(0,\infty)$, denote 
\begin{equation}\label{E:dgamma}
    D_{\gamma}=\max\left\{1,2^{\gamma-1}\right\}.
\end{equation}
Then elementary analysis shows that
\begin{equation}\label{E:dgamma-inequality}
    (a+b)^{\gamma}\le D_{\gamma}\left(a^{\gamma}+b^{\gamma}\right)
    \quad\text{for every $a,b\in[0,\infty)$.}
\end{equation}
It follows from~\eqref{E:weak-lattice} and~\eqref{E:quasitriangle} that if $\varrho$ is a sub-monotone functional, then, for every $f\in \mathcal M_+(0,\infty)$ and $c,\lambda>0$, one has
\begin{equation}\label{E:general-lambda}
    \vr(cf+\lambda) \le K\lambda \vr(\tfrac{c}{\lambda}f+1)
    \le K^2\lambda\vr(\tfrac{c}{\lambda}f)+K^2\lambda\vr(1)
    \le K^3c\vr(f)+K^2\lambda\vr(1).
\end{equation}

\begin{remark}\label{R:A}
    The inequalities in the statements (vi)--(ix) of Theorem~\ref{T:1} are restricted to strictly positive functions. We would like to point out here that this is not really a significant restriction. For instance, suppose that, for a given function $f$ in $\M_+(0,\infty)$, there exists some $A\in(0,\infty)$ such that $f$ is strictly positive on $(0,A)$, and possibly zero on $[A,\infty)$. Then, given $\varepsilon>0$, we can find a strictly positive function $f_{A,\varepsilon}$ on $(0,\infty)$, such that
    \begin{equation}\label{E:f-A}
        \left(\int_{0}^{\infty}f_{A,\varepsilon}(t)^{p}v(t)\,dt\right)^{\frac{1}{p}}
        \le (1+\varepsilon)
        \left(\int_{0}^{\infty}f(t)^{p}v(t)\,dt\right)^{\frac{1}{p}}.
    \end{equation}
    As a consequence, the inequality~\eqref{E:4} can be equivalently rephrased as    
    \begin{equation}\label{E:4-A}
        \vr\left(\chi_{(0,A)}(t)\exp\left(\frac{1}{V(t)}\int_{0}^{t}\log(f(s))v(s)\,ds\right)\right)
        \le C_6
        \left(\int_{0}^{A}f(t)^{p}v(t)\,dt\right)^{\frac{1}{p}}
    \end{equation}
    for functions that are strictly positive only on $(0,A)$
    (without even changing the constant in the inequality). A similar approach also applies to inequalities in statements (vii)--(ix).
\end{remark}

\begin{proof}[Proof of Theorem~\ref{T:1}]
    (i)$\Rightarrow$(vi). By Jensen's inequality applied to the function $s\mapsto \exp(s)$, which is convex on $(0,\infty)$, we obtain
    \begin{equation*}
        \exp\left(\frac{1}{V(t)}\int_{0}^{t}\log(f(s))v(s)\,ds\right)\le \frac{1}{V(t)}\int_{0}^{t}f(s)v(s)\,ds
    \end{equation*}
    for every $f\in\M_+(0,\infty)$ and every $t\in(0,\infty)$. Thus, by~\eqref{E:lattice} and~\eqref{E:1},
    \begin{align*}
        \vr&\left(\exp\left(\frac{1}{V(t)}\int_{0}^{t}\log(f(s))v(s)\,ds\right) \right)
        \le \vr\left(\frac{1}{V(t)}\int_{0}^{t}f(s)v(s)\,ds\right)
            \\
        &  \le C_1 \left(\int_{0}^{\infty} f(s)^{p}v(s)\,ds\right)^{\frac{1}{p}},
    \end{align*}
    and~\eqref{E:4} follows with $C_6\le C_1$.
    
    (vi)$\Rightarrow$(ii). Fix $r\in[1,\infty)$, $\alpha\in (\max\{-\frac 1p, -\frac1{p'}\},\infty)$, $h\in\M_+(0,\infty)$ and $t\in(0,\infty)$. Then, by properties of $\log$, monotonicity and elementary integration,
    \begin{align*}
        & \int_{0}^{t}\log\left(\left(\int_{s}^{\infty}h(\tau)\frac{v(\tau)}{V(\tau)}\,d\tau\right)^{\frac{r}{p}}V(s)^{\alpha }\right)v(s)\,ds
            \\
        & = \int_{0}^{t}\log\left(\left(\int_{s}^{\infty}h(\tau)\frac{v(\tau)}{V(\tau)}\,d\tau\right)^{\frac{r}{p}}\right)v(s)\,ds
            + \alpha\int_{0}^{t}\log\left(V(s)\right)v(s)\,ds
            \\
        & \ge \log\left(\left(\int_{t}^{\infty}h(\tau)\frac{v(\tau)}{V(\tau)}\,d\tau\right)^{\frac{r}{p}}\right)V(t)
            + \alpha V(t)\left(\log V(t)-1\right).
    \end{align*}
    Dividing this by $V(t)$ and applying the exponential function, we get
    \begin{align*}
        \exp &\left(\frac{1}{V(t)}\int_{0}^{t}\log\left(\left(\int_{s}^{\infty}h(\tau)\frac{v(\tau)}{V(\tau)}\,d\tau\right)^{\frac{r}{p}}V(s)^{\alpha }\right)v(s)\,ds\right)
            \\
        & \ge e^{-\alpha}\left(\int_{t}^{\infty}h(\tau)\frac{v(\tau)}{V(\tau)}\,d\tau\right)^{\frac{r}{p}}V(t)^{\alpha}.
    \end{align*}
    So, calling $\varrho$ into play, using~\eqref{E:lattice} and~\eqref{E:weak-lattice}, and applying (vi) to the function $f$ defined by
    \begin{equation*}
        f(t) = \left(\int_{t}^{\infty}h(\tau)\frac{v(\tau)}{V(\tau)}\,d\tau\right)^{\frac{r}{p}}V(t)^{\alpha }
        \quad\text{for $t\in(0,\infty)$,}
    \end{equation*}
    (resorting to Remark~\ref{R:A} if need be) we obtain
    \begin{align*}
        \varrho&\left(\left(\int_{t}^{\infty}h(\tau)\frac{v(\tau)}{V(\tau)}\,d\tau\right)^{\frac{r}{p}} V(t)^\alpha\right)
            \\
        &\le Ke^{\alpha} \varrho\left(\exp \left(\frac{1}{V(t)}\int_{0}^{t}\log\left(\left(\int_{s}^{\infty}h(\tau)\frac{v(\tau)}{V(\tau)}\,d\tau\right)^{\frac{r}{p}}V(s)^{\alpha }\right)v(s)\,ds\right)\right)
            \\
        &\le KC_6 e^{\alpha} \left(\int_{0}^{\infty} \left(\int_{t}^{\infty}h(s)\frac{v(s)}{V(s)}\,ds\right)^{r} V(t)^{\alpha p}v(t)\,dt\right)^{\frac{1}{p}}.
    \end{align*}
    Thanks to the assumption $\alpha>-\frac{1}{p}$ we can apply~\eqref{E:copson-weighted} to the last expression and obtain
    \begin{equation*}
        \varrho\left(\left(\int_{t}^{\infty}h(\tau)\frac{v(\tau)}{V(\tau)}\,d\tau\right)^{\frac{r}{p}} V(t)^\alpha\right)
        \le KC_6 e^{\alpha} B_{r,\alpha p}^{\frac rp}
        \left(\int_{0}^{\infty}h(t)^{r}V(t)^{\alpha p }v(t)\,dt\right)^{\frac{1}{p}},
    \end{equation*}
    which yields~\eqref{E:2} with $C_{2,1}\le KC_6 e^{\alpha} B_{r,\alpha p}^{\frac rp}$. Finally, plugging $f\equiv1$ into~\eqref{E:4}, we get~\eqref{E:constant} with $C_{2,2}\le C_6$, and (ii) follows. Note that, for this implication, the bound $\alpha>-\frac{1}{p'}$ is not necessary.
 
    (ii)$\Rightarrow$(iii).  This implication holds trivially.
   
    (ii)$\Rightarrow$(iv).
    Fix $r\in[1,\infty)$, 
    $\alpha\in (\max\{-\frac 1p,-\frac1{p'}\},\infty)$ and
    $\beta\in(-\frac{1}{r'},\infty)$ such that~\eqref{E:condition-iv} holds. Fix $h\in\M_+(0,\infty)$ such that $\int_{0}^{\infty}h(s)v(s)\,ds<\infty$, and set 
    \begin{equation*}
        f(t)=V(t)^{-\beta -1}\left(\int_{0}^{t} h(s)v(s)\,ds \right)\quad\text{for $t\in(0,\infty)$.}
    \end{equation*} 
    Fix $t\in(0,\infty)$ such that $0<V(t)<\infty$. 
    Then, by monotonicity and elementary integration, we have
    \begin{align}\label{E:lower-bound-iv}
            \left(\int_{t}^{\infty}f(s)\frac{v(s)}{V(s)}\,ds\right)^{\frac{r}{p}}
            &=
            \left(\int_{t}^{\infty}V(s)^{-\beta -1}\left(\int_{0}^{s} h(\tau)v(\tau)\,d\tau \right)\frac{v(s)}{V(s)}\,ds\right)^{\frac{r}{p}}
                \\
            &\ge \left(\int_{t}^{\infty}V(s)^{-\beta -1}\frac{v(s)}{V(s)}\,ds\right)^{\frac{r}{p}}
            \left(\int_{0}^{t}h(s)v(s)\,ds\right)^{\frac{r}{p}}
                \nonumber
                \\
            &=
            \left(\frac{1}{\beta +1}\right)^{\frac{r}{p}}\left(V(t)^{-\beta -1}-V(\infty)^{-\beta -1}\right)^{\frac{r}{p}}
            \left(\int_{0}^{t}h(s)v(s)\,ds\right)^{\frac{r}{p}}.
                \nonumber
    \end{align}
    We now write
    \begin{equation*}
        V(t)^{-\beta-1}
        =
        V(t)^{-\beta-1} - V(\infty)^{-\beta-1}
        + V(\infty)^{-\beta-1},
    \end{equation*}
    and apply~\eqref{E:dgamma-inequality} with $\gamma=\frac{r}{p}$ to the latter expression to get
    \begin{equation}\label{E:C}
        V(t)^{(-\beta-1)\frac{r}{p}}
        \le D_{\frac{r}{p}}
        \left(V(t)^{-\beta-1} - V(\infty)^{-\beta-1}
        \right)^{\frac{r}{p}}
        + D_{\frac{r}{p}}
        V(\infty)^{(-\beta-1)\frac{r}{p}}.
    \end{equation}
    Now, multiplying both sides of~\eqref{E:C} with $\left(\int_0^thv\right)^{\frac{r}{p}}$, and then applying~\eqref{E:lower-bound-iv}, we get
    \begin{align}\label{E:D}
        V(t)^{(-\beta-1)\frac{r}{p}}
        \left(\int_0^t h(s)v(s)\,ds\right)^{\frac{r}{p}}
        &\le
        D_{\frac{r}{p}}
        \left(V(t)^{-\beta-1} - V(\infty)^{-\beta-1}
        \right)^{\frac{r}{p}}
        \left(\int_0^t h(s)v(s)\,ds\right)^{\frac{r}{p}}
            \\
        &\quad
        + D_{\frac{r}{p}}
        V(\infty)^{(-\beta-1)\frac{r}{p}}
        \left(\int_0^t h(s)v(s)\,ds\right)^{\frac{r}{p}}
            \nonumber
            \\
        &\le
        (\beta+1)^{\frac{r}{p}}
        D_{\frac{r}{p}}
        \left(\int_{t}^{\infty}f(s)\frac{v(s)}{V(s)}\,ds\right)^{\frac{r}{p}}
            \nonumber
            \\
        &\quad
        + D_{\frac{r}{p}}
        V(\infty)^{(-\beta-1)\frac{r}{p}}
        \left(\int_0^t h(s)v(s)\,ds\right)^{\frac{r}{p}}.
            \nonumber
    \end{align}
    We shall now deal with the latter term, namely $\left(\int_0^t h(s)v(s)\,ds\right)^{\frac{r}{p}}$. Assume for the time being that $r>1$. Then, by the H\"older inequality, one has
    \begin{align}\label{E:E}
        \int_0^t h(s)v(s)\,ds
        &=
        \int_0^t h(s)
        V(s)^{-\beta+\frac{\alpha p}{r}}
        V(s)^{\beta-\frac{\alpha p}{r}}
        v(s)\,ds
            \\
        &\le
        \left(\int_{0}^{t}h(s)^r
        V(s)^{-\beta r+\alpha p}v(s)\,ds\right)^{\frac{1}{r}} 
        \left(\int_{0}^{t}
        V(s)^{(\beta-\frac{\alpha p}{r})r'}v(s)\,ds\right)^{\frac{1}{r'}}.
            \nonumber
    \end{align}
    Owing to the assumption~\eqref{E:condition-iv}, we have
    \begin{equation*}
        \left(\beta-\frac{\alpha p}{r}\right)r'+1>0.
    \end{equation*}
    Hence, elementary integration yields
    \begin{equation*}
        \int_{0}^{t}V(s)^{(\beta-\frac{\alpha p}{r})r'}v(s)\,ds
        =
        \frac{1}{(\beta-\frac{\alpha p}{r})r'+1}
        V(t)^{(\beta-\frac{\alpha p}{r})r'+1}.
    \end{equation*}
    Raising both sides of the last identity to $\frac{1}{r'}$ and then plugging it back to~\eqref{E:E}, we get
    \begin{align*}
        \int_0^t h(s)v(s)\,ds
       &\le
        \frac{1}{\left[(\beta-\frac{\alpha p}{r})r'+1\right]^{\frac{1}{r'}}}
        V(t)^{\beta-\frac{\alpha p}{r}+\frac{1}{r'}}
        \left(\int_{0}^{t}h(s)^r
        V(s)^{-\beta r+\alpha p}v(s)\,ds\right)^{\frac{1}{r}}.
    \end{align*}
    When $r=1$, then~\eqref{E:condition-iv} implies $\beta>\alpha p$, whence we get, owing to the monotonicity, 
    \begin{align*}
        \int_0^t h(s)v(s)\,ds
        &=
        \int_0^t h(s)V(s)^{-\beta+\alpha p}V(s)^{\beta-\alpha p}v(s)\,ds
        \le
        V(t)^{\beta-\alpha p}\int_0^t h(s)V(s)^{-\beta+\alpha p}v(s)\,ds.
    \end{align*}
    In any case, raising the last two estimates to $\frac{r}{p}$ and setting
    \begin{equation*}
        \kappa=
            \begin{cases}
                \left[(\beta-\frac{\alpha p}{r})r'+1\right]
                ^{-\frac{r-1}{p}},
                &\text{if $r\in(1,\infty)$}
                    \\
                1
                &\text{if $r=1$,}
            \end{cases}
    \end{equation*}
    we get
    \begin{align}\label{E:G}
        \left(\int_0^t h(s)v(s)\,ds\right)^{\frac{r}{p}}
       &\le
        \kappa
        V(t)^{\beta\frac{r}{p}-\alpha+\frac{r-1}{p}}
        \left(\int_{0}^{t}h(s)^r
        V(s)^{-\beta r+\alpha p}v(s)\,ds\right)^{\frac{1}{p}}.
    \end{align}
    Consequently, inserting~\eqref{E:G} into~\eqref{E:D}, we arrive at
    \begin{align*}
        V(t)^{(-\beta-1)\frac{r}{p}}
        &\left(\int_0^t h(s)v(s)\,ds\right)^{\frac{r}{p}}
        \le
        (\beta+1)^{\frac{r}{p}}
        D_{\frac{r}{p}}
        \left(\int_{t}^{\infty}f(s)\frac{v(s)}{V(s)}\,ds\right)^{\frac{r}{p}}
            \\
        &\quad
        + \kappa D_{\frac{r}{p}}
        V(\infty)^{(-\beta-1)\frac{r}{p}}
        V(t)^{\beta\frac{r}{p}-\alpha+\frac{r-1}{p}}
        \left(\int_{0}^{t}h(s)^r
        V(s)^{-\beta r+\alpha p}v(s)\,ds\right)^{\frac{1}{p}}.
    \end{align*}
    Now, since $V(t)\le V(\infty)$ and $\frac{(\beta+1)r-1}{p}>0$ (owing to the assumption $\beta>-\frac{1}{r'}$), we have
    \begin{equation*}
       V(\infty)^{(-\beta-1)\frac{r}{p}}
        V(t)^{\beta\frac{r}{p}+\frac{r-1}{p}}
        =
        \left(\frac{V(t)}{V(\infty)}\right)^{\frac{(\beta+1)r-1}{p}}V(\infty)^{-\frac{1}{p}}
        \le V(\infty)^{-\frac{1}{p}}.
    \end{equation*}
    Therefore,
    \begin{align*}
        V(t)^{(-\beta-1)\frac{r}{p}}
        &\left(\int_0^t h(s)v(s)\,ds\right)^{\frac{r}{p}}
        \le
        (\beta+1)^{\frac{r}{p}}
        D_{\frac{r}{p}}
        \left(\int_{t}^{\infty}f(s)\frac{v(s)}{V(s)}\,ds\right)^{\frac{r}{p}}
            \\
        &\quad
        + \kappa D_{\frac{r}{p}}
        V(t)^{-\alpha}
        V(\infty)^{-\frac{1}{p}}
        \left(\int_{0}^{t}h(s)^r
        V(s)^{-\beta r+\alpha p}v(s)\,ds\right)^{\frac{1}{p}}.
    \end{align*}
    Multiplying the last inequality by $V(t)^{\alpha}$ and extending the upper bound of the last integral to $\infty$, we arrive at
    \begin{align}\label{E:J}
        V(t)^{\alpha-\frac{\beta r}{p}}
        &\left(\frac{1}{V(t)}\int_0^t h(s)v(s)\,ds\right)^{\frac{r}{p}}
        \le
        (\beta+1)^{\frac{r}{p}}
        D_{\frac{r}{p}}
        V(t)^{\alpha}
        \left(\int_{t}^{\infty}f(s)\frac{v(s)}{V(s)}\,ds\right)^{\frac{r}{p}}
            \\
        &\quad
        + \kappa D_{\frac{r}{p}}
        V(\infty)^{-\frac{1}{p}}
        \left(\int_{0}^{\infty}h(s)^r
        V(s)^{-\beta r+\alpha p}v(s)\,ds\right)^{\frac{1}{p}}.
            \nonumber
    \end{align}
    Applying~\eqref{E:general-lambda} to~\eqref{E:J}, we obtain
    \begin{align}\label{E:upper-bound-iv-5}
            &\varrho\left(V(t)^{\alpha-\beta\frac{r}{p}}
            \left(\frac{1}{V(t)}
            \int_{0}^{t}h(s)v(s)\,ds\right)^{\frac{r}{p}}\right)
                \\
          & \qquad\le 
            K^3(\beta+1)^{\frac{r}{p}} D_{\frac{r}{p}}
            \varrho\left( V(t)^{\alpha}
            \left(\int_{t}^{\infty}f(s)\frac{v(s)}{V(s)}\,ds\right)^{\frac{r}{p}}\right)
                \nonumber
                \\
            & \qquad\qquad +
            K^2 D_{\frac{r}{p}}\kappa V(\infty)^{-\frac{1}{p}}
            \left(\int_{0}^{\infty}h(s)^rV(s)^{-\beta r+\alpha p}v(s)\,ds\right)^{\frac{1}{p}}\varrho(\boldsymbol{1}).
                \nonumber
    \end{align}
    By~\eqref{E:constant}, we have
    \begin{equation}\label{E:inter}
        \varrho(\boldsymbol{1}) \le C_{2,2}V(\infty)^{\frac{1}{p}}.
    \end{equation}
    Owing to~\eqref{E:2} and the definition of $f$, one gets
    \begin{align*}
            \varrho\left( V(t)^{\alpha}
            \left(\int_{t}^{\infty}f(s)\frac{v(s)}{V(s)}\,ds\right)^{\frac{r}{p}}\right)
            &\le C_{2,1} 
            \left(\int_{0}^{\infty}f(t)^rV(t)^{\alpha p}v(t)\,dt\right)^{\frac{1}{p}}
                \\
            &= C_{2,1} 
            \left(\int_{0}^{\infty}\left(\int_{0}^{t} h(s)v(s)\,ds \right)^r
            V(t)^{-(\beta+1)r+\alpha p}v(t)\,dt\right)^{\frac{1}{p}}
                \\
            &= C_{2,1} 
            \left(\int_{0}^{\infty}\left(\frac{1}{V(t)}\int_{0}^{t} h(s)v(s)\,ds \right)^r
            V(t)^{-\beta r+\alpha p}v(t)\,dt\right)^{\frac{1}{p}}.
    \end{align*}
    Thanks to~\eqref{E:condition-iv}, we can use~\eqref{E:hardy-weighted} to obtain
    \begin{align}\label{E:upper-bound-iv-6-bis}
            \varrho\left( V(t)^{\alpha}
            \left(\int_{t}^{\infty}f(s)\frac{v(s)}{V(s)}\,ds\right)^{\frac{r}{p}}\right)
            &\le C_{2,1} A_{r,-\beta r+\alpha p}^{\frac{r}{p}}
            \left(\int_{0}^{\infty}h(t)^rV(t)^{-\beta r+\alpha p}v(t)\,dt \right)^{\frac{1}{p}}.
    \end{align}
    Thus, combining~\eqref{E:upper-bound-iv-5}, \eqref{E:inter} and~\eqref{E:upper-bound-iv-6-bis}, one finally has
    \begin{align*}
            &\varrho\left(V(t)^{-\beta\frac{r}{p}+\alpha}\left(\frac{1}{V(t)}
            \int_{0}^{t}h(s)v(s)\,ds\right)^{\frac{r}{p}}\right)
                \\
            &\qquad\le 
            \left(K^3(\beta+1)^{\frac{r}{p}} D_{\frac{r}{p}}C_{2,1}A_{r,-\beta r+\alpha p}^{\frac{r}{p}}
            + K^2C_{2,2} D_{\frac{r}{p}}\kappa\right)
            \left(\int_{0}^{\infty}h(t)^rV(t)^{-\beta r+\alpha p}v(t)\,dt \right)^{\frac{1}{p}},
    \end{align*}
    and~\eqref{E:3} follows with
    \begin{equation*}
        C_4=K^3(\beta+1)^{\frac{r}{p}}D_{\frac{r}{p}} C_{2,1}A_{r,-\beta r+\alpha p}^{\frac{r}{p}}
            + K^2C_{2,2} D_{\frac{r}{p}}\kappa.
    \end{equation*}
   
    (iv)$\Rightarrow$(v). This implication holds trivially.
    
    (v)$\Rightarrow$(iii). Assume that $(r,\alpha,\beta)$ is a triple whose existence is guaranteed by~(v). Fix $h\in\M_+(0,\infty)$ and set
    \begin{equation*}
        f(t)=V(t)^{\beta}\int_{t}^{\infty}h(s)\frac{v(s)}{V(s)}\,ds
        \quad\text{for $t\in(0,\infty)$.}
    \end{equation*}
    By~\eqref{E:copson-weighted}, one has
    \begin{align}\label{E:v-iii-1}
            \left(\int_{0}^{\infty}f(t)^rV(t)^{-\beta r+\alpha p}v(t)\,dt\right)^{\frac{1}{p}}
            &=
            \left(\int_{0}^{\infty}\left(\int_{t}^{\infty}h(s)\frac{v(s)}{V(s)}\,ds\right)^rV(t)^{\alpha p}v(t)\,dt\right)^{\frac{1}{p}}
                \\
            &\le B_{r,\alpha p}^{\frac{r}{p}}
            \left(\int_{0}^{\infty}h(t)^{r}V(t)^{\alpha p}v(t)\,dt\right)^{\frac{1}{p}}.
                \nonumber
        \end{align}
    Fix $t\in(0,\infty)$. Then, owing to monotonicity and using integration, we obtain
    \begin{align}\label{E:v-iii-2}
            \left(\frac{1}{V(t)}\int_{0}^{t}f(s)v(s)\,ds\right)^{\frac{r}{p}}
            &=
            \left(\frac{1}{V(t)}\int_{0}^{t}V(s)^{\beta}
            \int_{s}^{\infty}h(\tau)\frac{v(\tau)}{V(\tau)}\,d\tau
            v(s)\,ds\right)^{\frac{r}{p}}
                \\
            &\ge V(t)^{-\frac{r}{p}}
            \left(\int_{t}^{\infty}h(s)\frac{v(s)}{V(s)}\,ds\right)^{\frac{r}{p}}
            \left(\int_{0}^{t}V(s)^{\beta}v(s)\,ds\right)^{\frac{r}{p}}
                \nonumber
                \\
            &= \frac{1}{(\beta+1)^{\frac{r}{p}}} V(t)^{\frac{\beta r}{p}}
            \left(\int_{t}^{\infty}h(s)\frac{v(s)}{V(s)}\,ds\right)^{\frac{r}{p}}.
                \nonumber
    \end{align}
    Thus, by~\eqref{E:v-iii-2}, \eqref{E:lattice}, \eqref{E:weak-lattice}, \eqref{E:3-bis}, and~\eqref{E:v-iii-1}, we get
    \begin{align*}
            \varrho\left(V(t)^{\alpha}\left(\int_{t}^{\infty}h(s)\frac{v(s)}{V(s)}\,ds\right)^{\frac{r}{p}}\right)
            &\le
            \varrho\left((\beta+1)^{\frac{r}{p}}V(t)^{\alpha-\frac{\beta r}{p}}\left(\frac{1}{V(t)}\int_{0}^{t}f(s)v(s)\,ds\right)^{\frac{r}{p}}\right)
                \\
            &\le KC_5(\beta+1)^{\frac{r}{p}}
            \left(\int_{0}^{\infty}f(t)^rV(t)^{-\beta r+\alpha p}v(t)\,dt\right)^{\frac{1}{p}}
                \\
            &\le KC_5(\beta+1)^{\frac{r}{p}}B_{r,\alpha p}^{\frac{r}{p}}
            \left(\int_{0}^{\infty}h(t)^{r}V(t)^{\alpha p}v(t)\,dt\right)^{\frac{1}{p}},
    \end{align*}
    and~\eqref{E:2-bis} follows with
    \begin{equation*}
            C_3\le KC_5(\beta+1)^{\frac{r}{p}}B_{r,\alpha p}^{\frac{r}{p}}.
    \end{equation*}
    To prove~\eqref{E:constant}, denote
    \begin{equation*}
        c=\beta-\frac{\alpha p}{r}+1,
    \end{equation*}
    and note that~\eqref{E:condition-iv} implies that $c>0$. Set
    \begin{equation*}
        f(t)=cV(t)^{\beta-\frac{\alpha p}{r}}\quad\text{for $t\in(0,\infty)$.}
    \end{equation*}
    Then
    \begin{equation*}
        \int_{0}^{t}f(s)v(s)\,ds = c \int_{0}^{t}V(s)^{\beta-\frac{\alpha p}{r}}v(s)\,ds = V(t)^{\beta-\frac{\alpha p}{r}+1}\quad\text{for $t\in(0,\infty)$,}
    \end{equation*}
    hence~\eqref{E:3-bis} implies that
    \begin{equation*}
        \varrho(\boldsymbol{1}) \le C_5 c^{\frac{r}{p}}V(\infty)^{\frac{1}{p}},
    \end{equation*}
    which is~\eqref{E:constant} with $C_{2,2}\le C_5 c^{\frac{r}{p}}$. This establishes~(iii).
    
    (vi)$\Rightarrow$ (vii) Fix $r\in(0,\infty)$, a measurable strictly positive function $g$ on $(0,\infty)$ and $t\in(0,\infty)$. Then, by Jensen's inequality, applied with the function $s\mapsto \exp(\frac{s}{r})$, which is convex on $(0,\infty)$, we obtain 
    \begin{equation*}
        \exp\left(\frac{1}{V(t)}\int_{0}^{t}\log(g(s)^r)v(s)\,ds\right)
            \le \left(\frac{1}{V(t)}\int_{0}^{t}g(s)v(s)\,ds\right)^{r},
    \end{equation*}
    or, which is the same,
    \begin{equation}\label{E:vi-vii-1}
        \left(\frac{V(t)}{\int_{0}^{t}g(s)v(s)\,ds}\right)^{r}
        \le
        \frac{1}{\exp\left(\frac{1}{V(t)}\int_{0}^{t}\log(g(s)^r)v(s)\,ds\right)}.
    \end{equation}
    Now let $f$ be a measurable strictly positive function on $(0,\infty)$. We set $g=\frac{1}{f}$. Then $g$ is also a measurable strictly positive function on $(0,\infty)$, hence it follows from~\eqref{E:vi-vii-1} that
    \begin{equation}\label{E:vi-vii-2}
        \left(\frac{V(t)}{\int_{0}^{t}f(s)^{-1}v(s)\,ds}\right)^{r}
        \le
        \exp\left(\frac{1}{V(t)}\int_{0}^{t}\log(f(s)^r)v(s)\,ds\right)
        \quad\text{for every $t\in(0,\infty)$.}
    \end{equation}
    Consequently, combining~\eqref{E:vi-vii-2}, \eqref{E:lattice}, and~\eqref{E:4} applied to $f^r$ in place of $f$, we obtain~\eqref{E:5} with $C_7\le C_6$.

    (vii)$\Rightarrow$(viii). This implication holds trivially.
    
    (viii)$\Rightarrow$(iii). Fix $h\in\M_+(0,\infty)$ and let $r\in(0,\infty)$ be the parameter (whose existence is guaranteed by (viii)) such that~\eqref{E:5-bis} holds. Set
    \begin{equation*}
        f(t) = 
        \left(\int_{t}^{\infty}h(s)\frac{v(s)}{V(s)}\,ds\right)^{\frac{1}{pr}}            \quad\text{for $t\in(0,\infty)$.}
        \end{equation*}
    Assume first that $f$ is finite and strictly positive on $(0,\infty)$. Fix $t\in(0,\infty)$. Owing to the fact that $f$ is nonincreasing, one has
    \begin{align*}
      \left(\int_{0}^{t}f(s)^{-1}v(s)\,ds\right)^{-r}
      &\ge f(t)^{r}V(t)^{-r}
      = \left(\int_{t}^{\infty}h(s)\frac{v(s)}{V(s)}\,ds\right)^{\frac{1}{p}}
      V(t)^{-r},
    \end{align*}
    that is,
    \begin{equation*}
        \left(\int_{t}^{\infty}h(s)\frac{v(s)}{V(s)}\,ds\right)^{\frac{1}{p}}
        \le
        \left(\frac{V(t)}{\int_{0}^{t}f(s)^{-1}v(s)\,ds}\right)^{r}.
    \end{equation*}
    Therefore, owing to~\eqref{E:lattice} and~\eqref{E:5-bis}, we get, by Fubini's theorem,
    \begin{equation*}
        \begin{split}
            \varrho\left(\left(\int_{t}^{\infty}h(s)\frac{v(s)}{V(s)}\,ds\right)^{\frac{1}{p}}\right)
            &\le C_8
            \left(\int_{0}^{\infty}f(t)^{rp}v(t)\,dt\right)^{\frac{1}{p}}
            = C_8
            \left(\int_{0}^{\infty}v(t)\int_{t}^{\infty}h(s)\frac{v(s)}{V(s)}\,ds\,dt\right)^{\frac{1}{p}}
                \\
            &= C_8
            \left(\int_{0}^{\infty}h(s)v(s)\,ds\right)^{\frac{1}{p}},
            \end{split}
   \end{equation*}
   and~\eqref{E:2-bis} (with $r=1$ and $\alpha=0$) follows with $C_3\le C_8$. Moreover, setting $f\equiv 1$ and plugging it into~\eqref{E:5-bis},
   we immediately obtain~\eqref{E:constant} with $C_{2,2}\le C_8$. This establishes (iii) for finite and strictly positive $f$. In the case of general $f$, the assertion follows similarly using Remark~\ref{R:A}.
    
    (i) $\Rightarrow$ (ix) Fix $f\in\M_+(0,\infty)$ and $t\in(0,\infty)$. It follows from the assumptions of (ix) that the function $\varphi^{-1}$ is convex. Owing to the Jensen inequality, we get
    \begin{equation*}
        \varphi^{-1}\left(\frac{1}{V(t)}\int_{0}^{t}\varphi(f(s))v(s)\,ds\right)
        \le \frac{1}{V(t)}\int_{0}^{t}f(s)v(s)\,ds.
    \end{equation*}
    Thus, (ix) with $C_9\le C_1$ follows immediately from (i) via~\eqref{E:lattice}.
    
    (ix)$\Rightarrow$(iii).
    Fix $h\in\M_+(0,\infty)$ and set
    \begin{equation*}
        f(t) = \int_{t}^{\infty}h(s)\frac{v(s)}{V(s)}\,ds\quad\text{for $t\in(0,\infty)$.}
    \end{equation*}
    Then $f$ is non-increasing, hence the assumptions on $\varphi$ yield 
    \begin{equation*}
         \varphi^{-1}\left(\frac{1}{V(t)}\int_{0}^{t}\varphi(f(s))v(s)\,ds\right)
         \ge
         \varphi^{-1}\left(\varphi(f(t))\frac{1}{V(t)}\int_{0}^{t}v(s)\,ds\right) =f(t)=
     \int_{t}^{\infty}h(s)\frac{v(s)}{V(s)}\,ds.    
    \end{equation*}
    Thus, by~\eqref{E:9-bis},~\eqref{E:lattice} and~\eqref{E:copson-weighted}
    \begin{equation*}
        \begin{split}
            \varrho\left(\int_{t}^{\infty}h(s)\frac{v(s)}{V(s)}\,ds\right)
            &\le
            \varrho\left(\varphi^{-1}\left(\frac{1}{V(t)}\int_{0}^{t}\varphi(f(s))v(s)\,ds\right)\right)
               \le C_9
            \left(\int_{0}^{\infty}f(t)^{p}v(t)\,dt\right)^{\frac{1}{p}}
                \\
            &= C_9
            \left(\int_{0}^{\infty}\left(\int_{t}^{\infty}h(s)\frac{v(s)}{V(s)}\,ds\right)^{p}v(t)\,dt\right)^{\frac{1}{p}}
                \le C_9B_{p,0}
            \left(\int_{0}^{\infty}h(t)^{p}v(t)\,dt\right)^{\frac{1}{p}},
        \end{split}
    \end{equation*}
    which is~\eqref{E:2-bis} with $\alpha=0$, $r=p$ and $C_3\le C_9$. Moreover, setting $f\equiv 1$ and plugging it into~\eqref{E:5-bis},
   we immediately obtain~\eqref{E:constant} with $C_{2,2}\le C_8$. This establishes (iii) (with the usual modification using Remark~\ref{R:A} when necessary).

        (iii)$\Rightarrow$(i). Let $r\in[1,\infty)$ and $\alpha\in(\max\{-\frac{1}{p},-\frac{1}{p'}\}, \infty)$ be the parameters, whose existence is guaranteed by (iii), such that~\eqref{E:2-bis} holds. Fix $t\in(0,\infty)$. We first observe that
    \begin{align}\label{E:formula-for-V}
            \frac{1}{V(t)}&=V(t)^{\alpha}[V(t)^{-\frac{(\alpha+1)p}{r}}]^{\frac rp}
                =V(t)^{\alpha}[V(t)^{-\frac{(\alpha+1)p}{r}}-V(\infty)^{-\frac{(\alpha+1)p}{r}}+V(\infty)^{-\frac{(\alpha+1)p}{r}}]^{\frac rp}
                \\
            &\le D_{\frac{r}{p}}V(t)^{\alpha}\left([V(t)^{-\frac{(\alpha+1)p}{r}}-V(\infty)^{-\frac{(\alpha+1)p}{r}}]^{\frac rp}+V(\infty)^{-\alpha-1}\right),
                \nonumber
    \end{align}
    where $D_{\frac{r}{p}}$ is from~\eqref{E:dgamma}. Now fix $h\in\M_+(0,\infty)$ such that 
    \begin{equation}\label{E:condition-finiteness}
        \int_{0}^{\infty}h(s)^{p}v(s)\,ds<\infty.
    \end{equation}
    Then, by the H\"older inequality, one clearly has
    \begin{equation*}
        \int_{0}^{t}h(s)v(s)\,ds<\infty
        \quad\text{for every $t\in(0,\infty)$}
    \end{equation*}
    owing to the fact that $V(t)<\infty$ for every $t\in(0,\infty)$.
    Thus~\eqref{E:formula-for-V} yields
    \begin{align}\label{E:last-1}
            &\frac{1}{V(t)}\int_{0}^{t}h(s)v(s)\,ds
                \\
            &\le
            D_{\frac{r}{p}}V(t)^{\alpha} [V(t)^{-\frac{(\alpha+1)p}{r}}-V(\infty)^{-\frac{(\alpha+1)p}{r}}]^{\frac rp}\int_{0}^{t}h(s)v(s)\,ds
                + D_{\frac{r}{p}}V(\infty)^{-\alpha-1}V(t)^{\alpha}\int_{0}^{t}h(s)v(s)\,ds
                \nonumber
                \\
            &= I+II.
                \nonumber
    \end{align}
    Using H\"older's inequality and extending the integration region to $(0,\infty)$, one has
    \begin{equation*}
        \int_{0}^{t}h(s)v(s)\,ds
        \le
        V(t)^{\frac{1}{p'}}\left(\int_{0}^{t}h(s)^pv(s)\,ds\right)^{\frac{1}{p}}
        \le
        V(t)^{\frac{1}{p'}}\left(\int_{0}^{\infty}h(s)^pv(s)\,ds\right)^{\frac{1}{p}},
    \end{equation*}
    whence, owing to the monotonicity of $V$ and the assumption $\alpha+\frac{1}{p'}>0$, we get
    \begin{align}\label{E:II}
        II &\le D_{\frac{r}{p}}\left(\frac{V(t)}{V(\infty)}\right)^{\alpha+\frac{1}{p'}}V(\infty)^{-\frac{1}{p}}\left(\int_{0}^{\infty}h(s)^pv(s)\,ds\right)^{\frac{1}{p}}
            \le D_{\frac{r}{p}}V(\infty)^{-\frac{1}{p}}\left(\int_{0}^{\infty}h(s)^pv(s)\,ds\right)^{\frac{1}{p}}.
    \end{align}
    In order to estimate $I$, define the function $f$ by
    \begin{equation*}
        f(s)=V(s)^{-\frac{(\alpha+1)p}{r}}\left(\int_{0}^{s}h(\tau)v(\tau)\,d\tau\right)^{\frac{p}{r}}\quad\text{for $s\in(0,\infty)$,}
    \end{equation*}
    and note that, by monotonicity, 
    \begin{align*}
            \left(\int_{t}^{\infty}f(s)\frac{v(s)}{V(s)}\,ds\right)^{\frac{r}{p}}
            &=\left(\int_{t}^{\infty}V(s)^{-\frac{(\alpha+1)p}{r}-1}\left(\int_{0}^{s}h(\tau)v(\tau)\,d\tau\right)^{\frac{p}{r}}v(s)\,ds\right)^{\frac{r}{p}}
                \\
            &\ge \left(\int_{t}^{\infty}V(s)^{-\frac{(\alpha+1)p}{r}-1}v(s)\,ds\right)^{\frac{r}{p}}\int_{0}^{t}h(s)v(s)\,ds
                \\
            &= \frac{r}{(\alpha +1)p}\left[V(t)^{\frac{-(\alpha+1)p}{r}}- V(\infty)^{\frac{-(\alpha+1)p}{r}}\right]^{\frac{r}{p}}\int_{0}^{t}h(s)v(s)\,ds.
    \end{align*}
    Consequently,
    \begin{equation}\label{E:one}
        I \le \frac{(\alpha+1)p}{r}D_{\frac{r}{p}} V(t)^{\alpha} \left(\int_{t}^{\infty}f(s)\frac{v(s)}{V(s)}\,ds\right)^{\frac{r}{p}}.
    \end{equation}
    Altogether,~\eqref{E:last-1}, \eqref{E:II} and~\eqref{E:one}, we get 
    \begin{equation}\label{E:estimate-I+II}
        \frac{1}{V(t)}\int_{0}^{t}h(s)v(s)\,ds \le 
        \frac{(\alpha+1)p}{r}D_{\frac{r}{p}} V(t)^{\alpha} \left(\int_{t}^{\infty}f(s)\frac{v(s)}{V(s)}\,ds\right)^{\frac{r}{p}} + D_{\frac{r}{p}}V(\infty)^{-\frac{1}{p}}\left(\int_{0}^{\infty}h(s)^pv(s)\,ds\right)^{\frac{1}{p}}.
    \end{equation}
    As~\eqref{E:estimate-I+II} is valid pointwise for every $t\in(0,\infty)$, calling $\varrho$ into play and using~\eqref{E:general-lambda} followed by~\eqref{E:weak-lattice}, we obtain
    \begin{align}\label{E:varrho-1}
            \varrho\left(\frac{1}{V(t)}\int_{0}^{t}h(s)v(s)\,ds\right)
            &\le K^3\frac{(\alpha+1)p}{r}D_{\frac{r}{p}}
            \varrho\left(V(t)^{\alpha} \left(\int_{t}^{\infty}f(s)\frac{v(s)}{V(s)}\,ds\right)^{\frac{r}{p}}\right)
                \\
            &\qquad\qquad+ K^2D_{\frac{r}{p}}V(\infty)^{-\frac{1}{p}}\left(\int_{0}^{\infty}h(s)^pv(s)\,ds\right)^{\frac{1}{p}}\vr(\boldsymbol{1}).
                \nonumber
    \end{align}
    Now, (iii) implies that
    \begin{equation}\label{E:varrho-2}
        \varrho\left(V(t)^{\alpha} \left(\int_{t}^{\infty}f(s)\frac{v(s)}{V(s)}\,ds\right)^{\frac{r}{p}}\right)
        \le C_3 \left(\int_{0}^{\infty}f(t)^r V(t)^{\alpha p } v(t)\,dt\right)^{\frac{1}{p}}
    \end{equation}
    and
    \begin{equation}\label{E:varrho-3}
        \varrho(\boldsymbol{1}) \le C_{2,2}V(\infty)^{\frac{1}{p}}.
    \end{equation}
    Plugging~\eqref{E:varrho-2} and~\eqref{E:varrho-3} into~\eqref{E:varrho-1}, we infer that
    \begin{align}\label{E:last-but-one}
            \varrho\left(\frac{1}{V(t)}\int_{0}^{t}h(s)v(s)\,ds\right)
                &\le C_3K^3\frac{(\alpha+1)p}{r}D_{\frac{r}{p}}
            \left(\int_{0}^{\infty}f(t)^r V(t)^{\alpha p } v(t)\,dt\right)^{\frac{1}{p}}
                \\
            &\qquad+ C_{2,2}K^2D_{\frac{r}{p}}\left(\int_{0}^{\infty}h(s)^pv(s)\,ds\right)^{\frac{1}{p}}.
                \nonumber
    \end{align}
    Owing to~\eqref{E:hardy-weighted}, applied to $\alpha=0$, and the definition of $f$, we have
    \begin{align}\label{E:hardy-inside}
            \left(\int_{0}^{\infty}f(t)^r V(t)^{\alpha p} v(t)\,dt\right)^{\frac{1}{p}}
            & = \left(\int_0^\infty \left(\frac{1}{V(t)}\int_{0}^{t}h(s)v(s)\,ds\right)^p v(t)\,dt\right)^{\frac1 p}
                \\
            & \le A_{p,0}\left(\int_0^\infty h(t)^p v(t)\,dt\right)^{\frac 1p}.
                \nonumber
    \end{align}
    Thus, finally, by~\eqref{E:last-but-one} and~\eqref{E:hardy-inside}, one has
    \begin{equation*}
            \varrho\left(\frac{1}{V(t)}\int_{0}^{t}h(s)v(s)\,ds\right)
                \le
                \left[C_3K^3
                \frac{(\alpha+1)p}{r}A_{p,0}
                D_{\frac{r}{p}}
                + C_{2,2}K^2D_{\frac{r}{p}}\right]\left(\int_{0}^{\infty}h(s)^pv(s)\,ds\right)^{\frac{1}{p}},
    \end{equation*}
    which is~(i) with
    \begin{equation*}
        C_1 \le C_3K^3D_{\frac{r}{p}}\frac{(\alpha+1)p}{r}A_{p,0}
         + C_{2,2}K^2D_{\frac{r}{p}}.
    \end{equation*}
    This shows the assertion for every $h$ satisfying~\eqref{E:condition-finiteness}. If this condition is not satisfied, then (i) holds trivially. 
    The proof is complete.
\end{proof}

\begin{proof}[Proof of Theorem~\ref{T:3}]
    We rewrite the inequality \eqref{E:10} as
     \begin{equation*}
         \left(\vr\left(\left(\exp\left( \frac{1}{U(x)}\int_{0}^{x}\log(f)(t)u(t)\,dt\right)\right)^m w(x)\right)\right)^{\frac{1}{m}}
        \le C_{10}^q \left(\int_{0}^{\infty}f(x)^{mp}u(x)\,dx\right)^{\frac{1}{mp}}.
     \end{equation*}
    Using Theorem~\ref{T:1} for $\vr_{m,w}(f)=\vr(f^m w)^{\frac{1}{m}}$ instead of $\vr$, and for $v=u$ and $p=mp$, we obtain 
    the claim.
\end{proof}

\begin{proof}[Proof of Theorem~\ref{T:4}]
    We rewrite the inequality \eqref{E:13} as
    \begin{equation*}
        \vr_m\left(\left(\frac{\widetilde{U}(x)}{\int_{0}^{x} f(t)^{-1}
        \widetilde{u}\,dt}\right)^{\frac{1}{m}}\right)
        \le C_{17} \left(\int_{0}^{\infty}f(x)^{mp\frac{1}{m}}   \widetilde{u}(x)\,dx\right)^{\frac{1}{pm}},
    \end{equation*}
    where $\vr_m(g)=\vr\left( g(x)^m U(x) \widetilde{U}(x)^{-1} \right)^{\frac{1}{m}} $.
    We then use Theorem~\ref{T:1} with $r=\frac{1}{m}$ and $p=mp$, and the claim follows.
\end{proof}

\section*{Compliance with Ethical Standards}\label{conflicts}

\smallskip
\par\noindent

{\bf Funding}. This research was partly funded by:
\\ (i) Czech Science Foundation,  grant number 23-04720S 
(A.~Gogatishvili, L. Pick);
\\ (ii) Czech Academy of Sciences, grant number RVO:679858 (A.~Gogatishvili);
\\ (iii) Shota Rustaveli National Science Foundation of Georgia (SRNSFG), grant no FR21-12353 (A.~Gogati\-shvili).

\bigskip
\par\noindent
{\bf Conflict of Interest}. The authors declare that they have no conflict of interest.

\bigskip
\par\noindent

\


\begin{thebibliography}{10}
	
	\bibitem{Bee:61}
	P.~R. Beesack.
	Hardy's inequality and its extensions.
 {\em Pacific J. Math.}, {\bf 11} (1961), 39--61.
	
	\bibitem{BeGr:06}
	G.~Bennett and K.-G. Grosse-Erdmann.
 Weighted {H}ardy inequalities for decreasing sequences and functions.
 {\em Math. Ann.}, {\bf 334} (2006), no.3, 489--531.
	
	\bibitem{Bra:78}
	J.~S. Bradley.
 Hardy inequalities with mixed norms.
 {\em Canad. Math. Bull.},  {\bf 21} (1978), no. 4, 405--408.
	
	\bibitem{Gog:20}
	A.~Gogatishvili and J.~S. Neves.
 Weighted norm inequalities for positive operators restricted on the
	cone of {$\lambda$}-quasiconcave functions.
 {\em Proc. Roy. Soc. Edinburgh Sect. A}, {\bf  150} (2020), no. 1,  17--39.
	
	\bibitem{HLP:88}
	G.~H. Hardy, J.~E. Littlewood, and G.~P\'{o}lya.
 {\em Inequalities}.
 Cambridge Mathematical Library. Cambridge University Press,
	Cambridge, 1988.
 Reprint of the 1952 edition.
	
	\bibitem{Ja-Pe-We:01}
	P.~Jain, L.-E. Persson, and A.~Wedestig.
 Carleman-{K}nopp type inequalities via {H}ardy inequalities.
 {\em Math. Inequal. Appl.}, { \bf  4} ( 2001), no. 3,  343--355.
	
	\bibitem{Ja-Si:00}
	P.~Jain and A.~P. Singh.
 A characterization for the boundedness of geometric mean operator.
 {\em Appl. Math. Lett.}, {\bf  13}  (2000), no. 8,  63--67.
	
	\bibitem{KacKre:58}
	I.~S. Kac and M.~G. Kre\u{\i}n.
 Criteria for the discreteness of the spectrum of a singular string.
 {\em Izv. Vys\v{s}. U\v{c}ebn. Zaved. Matematika}, {\bf 1958} (1958), no. 2
	(3), 136--153.
	
	\bibitem{Ka-Pe-Ob:02}
	S.~Kaijser, L.-E. Persson, and A.~\"{O}berg.
 On {C}arleman and {K}nopp's inequalities.
 {\em J. Approx. Theory},  {\bf  117} (2002), no. 1,  140--151.
	
	\bibitem{Kok:79}
	V.~M. Kokila\v{s}vili.
 On {H}ardy's inequalities in weighted spaces.
 {\em Soobshch. Akad. Nauk Gruzin. SSR}, {\bf 96} ( 1979), no. 1,  37--40.
	
	\bibitem{Kuf:07}
	A.~Kufner, L.~Maligranda, and L.-E. Persson.
 {\em The {H}ardy inequality. About its history and some related
		results}.
 Vydavatelsk\'{y} Servis, Plze\v{n}, 2007.
	
	\bibitem{Maz:11}
	V.~Maz'ya.
 {\em Sobolev spaces with applications to elliptic partial
		differential equations}, volume 342 of {\em Grundlehren der Mathematischen
		Wissenschaften [Fundamental Principles of Mathematical Sciences]}.
 Springer, Heidelberg, augmented edition, 2011.
	
	\bibitem{Muc:72}
	B.~Muckenhoupt.
 Hardy's inequality with weights.
 {\em Studia Math.}, {\bf 44} (1972), 31--38.
	
	\bibitem{Pe-St:02}
	L.-E. Persson and V.~D. Stepanov.
 Weighted integral inequalities with the geometric mean operator.
 {\em J. Inequal. Appl.}, {\bf 7} ( 2002), no. 5, 727--746.
	
	\bibitem{Pi-Op:94}
	L.~Pick and B.~Opic.
 On the geometric mean operator.
 {\em J. Math. Anal. Appl.}, {\bf 183} (1994), no. 3, 652--662. 
	
	\bibitem{Rie:74}
	S.~D. Riemenschneider.
 Compactness of a class of {V}olterra operators.
 {\em Tohoku Math. J. (2)}, {\bf 26} (1974), 385--387.
	
	\bibitem{Saw:84}
	E.~Sawyer.
 Weighted {L}ebesgue and {L}orentz norm inequalities for the {H}ardy
	operator.
 {\em Trans. Amer. Math. Soc.}, {\bf 281} (1984), no. 1, 329--337.
	
	\bibitem{Sin:91}
	G.~Sinnamon.
 Weighted {H}ardy and {O}pial-type inequalities.
 {\em J. Math. Anal. Appl.}, {\bf 160} (1991),  no. 2, 434--445.
	
	\bibitem{SiSt:96}
	G.~Sinnamon and V.~D. Stepanov.
 The weighted {H}ardy inequality: new proofs and the case {$p=1$}.
 {\em J. London Math. Soc. (2)}, {\bf 54}, (1996), no. 1, 89--101.
	
	\bibitem{Tal:69}
	G.~Talenti.
 Osservazioni sopra una classe di disuguaglianze.
 {\em Rend. Sem. Mat. Fis. Milano}, {\bf 39} (1969),  171--185 .
	
	\bibitem{Tom:69}
	G.~Tomaselli.
 A class of inequalities.
 {\em Boll. Un. Mat. Ital.   (4)}, {\bf  2}  (1969), 622--631.
	
\end{thebibliography}

\ 

\end{document}